\title[GJMS operators and $Q$-curvature]
{{\bf GJMS operators and $Q$-curvature for conformal Codazzi structures}}
\author{TAIJI MARUGAME}
\date{}
\newcommand\R{\mathbb{R}}
\newcommand\Ric{{\rm Ric}}
\newcommand\scal{{\rm Scal}}
\renewcommand\a{\alpha}
\renewcommand\b{\beta}
\newcommand\g{\gamma}
\renewcommand\d{\delta}
\newcommand\e{\epsilon}
\renewcommand\r{\rho}
\renewcommand\th{\theta}
\newcommand\br{\boldsymbol{\rho}}
\newcommand\bu{\boldsymbol{u}}
\newcommand\bh{\boldsymbol{h}}
\newcommand\U{\Upsilon}
\newcommand\lam{\lambda}
\newcommand{\calE}{\mathcal{E}}
\newcommand{\calJ}{\mathcal{J}}
\newcommand{\calO}{\mathcal{O}}
\newcommand{\wt}{\widetilde}
\newcommand{\wh}{\widehat}
\newtheorem{thm}{Theorem}[section]
\newtheorem{prop}[thm]{Proposition}
\newtheorem{lem}[thm]{Lemma}
\newtheorem{cor}[thm]{Corollary}
\address{Graduate School of Mathematical Sciences, The University of Tokyo,
	3-8-1 Komaba, Meguro, Tokyo 153-8914, Japan}
\email{marugame@ms.u-tokyo.ac.jp}
\subjclass[2010]{53A30 (primary), 53A55 (secondary)}  
\keywords{ambient metric; GJMS operator; $Q$-curvature; conformal geometry; projective geometry} 
\begin{document}
\maketitle 
\begin{abstract}
The conformal Codazzi structure is an intrinsic geometric structure on strictly convex hypersufaces in a locally flat projective manifold. We construct the GJMS operators and the $Q$-curvature for conformal Codazzi structures by using the ambient metric. We relate the total $Q$-curvature to the logarithmic coefficient in the volume expansion of the Blaschke metric, and derive the first and second variation formulas for a deformation of strictly convex domains.
\end{abstract}
\section{Introduction}
The ambient metric is a powerful tool for constructing geometric invariants and invariant differential operators such as GJMS operators (\cite{GJMS}, \cite{GG}) and $Q$-curvature (\cite{FH}) in conformal and CR geometries. It was first introduced by Fefferman \cite{Fe} for CR manifolds via the solution to the complex Monge--Amp\`ere equation, and then 
the Fefferman--Graham ambient metric was constructed for conformal manifolds by solving Ricci flat equation formally on the ambient space (\cite{FG1}, \cite{FG3}). 

In this article we define the ambient metric for another geometric structure: the {\it conformal 
Codazzi structure}. We will quickly review this geometric structure in dimension greater than 3; 
We refer the reader to \cite{BC} and \cite{M} for the details including low dimensional cases. 
Let $M$ be a $C^\infty$-manifold of dimension $n\ge4$. We use abstract index notation to denote tensors and tensor bundles. For example, we denote $TM$ and $S^3T^\ast M$ by $\calE^\a$ and $\calE_{(\a\b\g)}$ respectively. We also use these symbols to denote the space of sections of the bundles. The conformal density bundle of weight $w$ is defined by $\calE[w]=(\wedge^n T^\ast M)^{-w/n}$, and we put $[w]$ to a bundle to express the tensor product with $\calE[w]$. A conformal Codazzi structure on $M$ is a conformal structure $[h]$ together with a trace-free symmetric 3-tensor $A_{\a\b\g}\in\calE_{(\a\b\g)_0}[2]$, called the {\it Fubini--Pick form}, which satisfies the following {\it Gauss--Codazzi equations}:
\begin{align*}
&({\rm Gauss}) &
 2\,{\rm tf}(A_{\nu\g[\a}A_{\b]\mu}{}^\nu)+W^h_{\a\b\g\mu}&=0,  \\
&({\rm Codazzi}) &
\nabla^h_{[\a}A_{\b]\g\mu}-\frac{1}{n}\Bigl(h_{\mu[\a}(\d A)_{\b]\g}+h_{\g[\a}(\d A)_{\b]\mu}\Bigr)&=0,
\end{align*}
where $\bh_{\a\b}\in\calE_{\a\b}[2]$ is the conformal metric, $W^h_{\a\b\g\mu}$ is the Weyl tensor and $(\d A)_{\a\b}=\nabla^h_\g A_{\a\b}{}^\g$. The conformal Codazzi structure is an intrinsic geometric structure on a strictly convex hypersurface in a locally flat projective manifold; The projective second fundamental form induces a conformal structure and the Gauss--Codazzi equations come from the flatness of the ambient projective structure. Conversely, the conformal structure and the Fubini--Pick form recover the local immersion to the projective space (projective Bonnet theorem).

In this paper we consider the case where $M$ is globally realized as the boundary of a strictly convex domain $\Omega$ in a manifold $N$ with a locally flat projective structure $[\nabla]$. As in the CR case, the ambient metric for the conformal Codazzi structure is constructed from the solution to the Monge--Amp\`ere equation on $\Omega$. The real Monge--Amp\`ere equation for a projective density $\br\in\calE(2)$ is defined by 
\begin{equation}\label{real-MA}
\begin{aligned}
\calJ[\br]&:=\det(D_ID_J\br) =\det\begin{pmatrix} 2\br & \nabla_j\br \\
                              \nabla_i\br & \nabla_i\nabla_j\br+2\br P_{ij}
                                \end{pmatrix}=-1, \\
\Omega&=\{\br<0\},
\end{aligned}
\end{equation}
where $D_I$ is the projective $D$-operator and $P_{ij}=(1/n)\Ric_{ij}$ is the projective Schouten 
tensor of $\nabla\in[\nabla]$. As in the complex case (\cite{Fe}), one can construct an approximate solution to \eqref{real-MA}:
\begin{prop}[\cite{M}]
There exists a defining density $\br\in\calE(2)$ such that
\begin{equation}\label{approxMA}
\calJ[\br]=\begin{cases}
-1+O(\br^{\infty}) & {\text if }\ n\ {\text is\ odd}, \\
-1+\calO\br^{n/2+1} &  {\text if }\ n\ {\text is\ even},
\end{cases}
\end{equation}
where $\calO\in\calE(-n-2)$. Moreover, such a density is unique modulo $O(\br^{\infty})$ for $n$ odd, and unique modulo $O(\br^{n/2+2})$ for $n$ even.
\end{prop}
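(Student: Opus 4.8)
The plan is to construct $\br$ as a formal asymptotic series at $M$, correcting an initial defining density order by order, exactly as in Fefferman's treatment of the complex Monge--Amp\`ere equation \cite{Fe} and the Fefferman--Graham expansion \cite{FG1}. The engine of the construction is the behaviour of $\calJ$ under the substitution $\br\mapsto\br+\phi$, so I would begin by recording two structural facts. Writing $g_{IJ}:=D_ID_J\br$ and $g^{IJ}$ for its inverse (nondegenerate of Lorentzian signature near $M$ by strict convexity), the linearisation of the determinant gives
\begin{equation*}
\calJ[\br+\phi]=\calJ[\br]\bigl(1+g^{IJ}D_ID_J\phi\bigr)+(\text{quadratic and higher order in }\phi).
\end{equation*}
Lifting $\br$ to the degree-two homogeneous function on the Thomas cone, $D_I$ becomes the ambient gradient and Euler's relation forces the identities $g^{IJ}(D_I\br)(D_J\br)=2\br$, $g^{IJ}g_{IJ}=n+2$, and $g^{IJ}(D_I\br)(D_J\psi)=w\,\psi$ for $\psi\in\calE(w)$. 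I expect verifying these identities inside the projective tractor ($D$-operator) calculus to be the most delicate bookkeeping, but they are dictated by homogeneity.

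Granting these, I would compute the indicial factor. For $\phi=\br^{m}\psi$ the three identities give, after contracting the Leibniz expansion of $D_ID_J(\br^m\psi)$ with $g^{IJ}$,
\begin{equation*}
g^{IJ}D_ID_J(\br^{m}\psi)=m\,(n+4-2m)\,\br^{m-1}\psi+O(\br^{m}),
\end{equation*}
where $\psi\in\calE(2-2m)$ since $\phi$ must have weight $2$. The rank-one structure of the leading Hessian term $\br^{m-2}(D_I\br)(D_J\br)\psi$ shows, again via $g^{IJ}(D_I\br)(D_J\br)=2\br$, that the quadratic remainder in the determinant expansion is $O(\br^{2m-2})$. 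Thus if $\calJ[\br]=-1+\br^{k}f$ with $k\ge1$, the choice $\phi=\br^{k+1}\psi$ reduces the improvement of the error to the algebraic relation
\begin{equation*}
(k+1)(n+2-2k)\,\psi=f+O(\br),
\end{equation*}
whose $O(\br^{2k})\subseteq O(\br^{k+1})$ remainder does not interfere. Hence one order is gained precisely when $(k+1)(n+2-2k)\neq0$.

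First I would fix the constant term: by strict convexity $\calJ[\br_0]$ is a nonvanishing function of the correct sign near $M$, and a pointwise normalisation along $M$ (solvable because the relevant factor $n+2$, the case $m=1$, is nonzero) arranges $\calJ[\br_0]=-1+O(\br_0)$. I then iterate the displayed step for $k=1,2,\dots$. For $n$ odd the factor $n+2-2k$ never vanishes on the integers, so every order can be solved and an asymptotic (Borel) summation produces $\br$ with $\calJ[\br]=-1+O(\br^{\infty})$. For $n$ even the factor is positive for $1\le k\le n/2$ and vanishes exactly at $k=n/2+1$; the construction therefore proceeds up to $\calJ[\br]=-1+O(\br^{n/2+1})$, and at this order the coefficient $f$ cannot be removed. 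Matching $\br^{n/2+1}\in\calE(n+2)$ against the weight-zero equation identifies the obstruction as a well-defined density $\calO\in\calE(-n-2)$, giving $\calJ[\br]=-1+\calO\,\br^{n/2+1}$.

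Finally, uniqueness follows from the same indicial factor applied to the homogeneous (linearised) problem: adding $\br^{m}\psi$ leaves $\calJ$ unchanged to the relevant order precisely when $m(n+4-2m)=0$, i.e.\ $m=n/2+2$ (discarding $m=0$). For $n$ odd this never occurs at an integer, so the solution is rigid modulo $O(\br^{\infty})$; for $n$ even the single resonance at $m=n/2+2$ is exactly the freedom of adding $\br^{n/2+2}\psi$, yielding uniqueness modulo $O(\br^{n/2+2})$. The main obstacle throughout is not the induction but the first paragraph: establishing the linearisation of $\det(D_ID_J\br)$ and the homogeneity identities for the projective $D$-operator, after which the indicial analysis and the odd/even dichotomy are routine.
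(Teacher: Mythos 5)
Your proposal is correct and takes essentially the same route as the paper's source for this proposition (it is quoted from \cite{M}, whose algorithm is visible in this paper's proofs of Lemma \ref{strict} and Lemma \ref{def-density}): a Fefferman-style order-by-order correction $\br\mapsto\br+\psi\br^{k+1}$, where your indicial factor $m(n+4-2m)$ is exactly the commutation relation $[\wt\Delta,\br^m]$ recorded in the paper, so the resonance at $m=n/2+2$ simultaneously produces the obstruction $\calO\in\calE(-n-2)$ at order $\br^{n/2+1}$ and the stated uniqueness in both parities. The only cosmetic difference is that the initial normalization (and the $m=1$ step of uniqueness, where your quadratic remainder $O(\br^{2m-2})$ is not yet subleading) is handled in \cite{M} multiplicatively via $\br\mapsto(-\calJ[\br])^{-1/(n+2)}\br$, the finite version of your linearized $m=1$ argument.
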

We call $\br$ a {\it Fefferman defining density} when it satisfies \eqref{approxMA}. The density $\mathcal{O}$ is called the {\it obstruction density} and the boundary value 
$\mathcal{O}|_M\in\calE[-n-1]$ gives a local conformal invariant.

We define the ambient metric for a conformal Codazzi structure by the projective tractor 
$$
\wt g_{IJ}=D_ID_J\br
$$ 
with a Fefferman defining density $\br$. It is identified with a homogeneous Lorentz metric on the projective density bundle and allows us to construct the GJMS operators and the $Q$-curvature as in conformal and CR cases. The GJMS operator $P_m:\calE[m-n/2]\rightarrow\calE[-m-n/2]$ is a conformally invariant linear differential operator whose symbol agrees with that of the power of the Laplacian $\Delta_h^m$, and is different from usual conformal GJMS operator. In particular, we can define $P_{n/2+2}$ for even $n$, which does not exist for general conformal manifolds (\cite{GH}). The conformal Codazzi $Q$-curvature integrates to a global conformal invariant and is also different from usual conformal $Q$-curvature.

Recall that conformal and CR $Q$-curvatures are intimately related to the geometry of complete metrics inside the domains, such as Poincar\'e-Einstein metric (\cite{G2}, \cite{GZ}, \cite{FG2}) and the Cheng--Yau metric (\cite{CY}, \cite{HPT}).  In our case, the relevant metric is the {\it Blaschke metric}, which is a projectively invariant metric on $\Omega$ defined by 
$$
g_{ij}=\frac{\nabla_i\nabla_j\br}{-2\br}+\frac{\nabla_i\br\nabla_j\br}{4\br^2}-P_{ij}.
$$
In \cite{M}, the author shows that the Blaschke metric has the volume expansion 
\begin{equation*}
{\rm Vol}(\{\tau^{-2}\br<-{\textstyle\frac{1}{2}}\e^2\}) 
=\sum_{j=0}^{\lceil n/2\rceil-1}c_{-n+2j}\e^{-n+2j}+
\begin{cases}V+o(1) & (n: {\rm odd}) \\
L\log(1/\e)+V+o(1) & (n: {\rm even})
\end{cases}
\end{equation*}
for a fixed projective scale $\tau\in\calE(1)$, and $V$ is independent of the choice of $\tau$ when $n$ is odd while $L$ is independent of the choice of $\tau$ when $n$ is even. 
We prove that the integral $\overline{Q}$ of the conformal Codazzi $Q$-curvature agrees with  
a multiple of $L$. By using this fact, we show that the first variation of $\overline{Q}$ for a deformation of a strictly convex domain is given by the obstruction density $\calO$ (Theorem \ref{first-var}). As a corollary, it is shown that a strictly convex surface in $\R^3$ is a critical point of $\overline{Q}$ if and only if it is projectively equivalent to the sphere (Corollary \ref{cor-first-var}). We also show that the second variation for a deformation parametrized by a density on the boundary is described by the GJMS operator $P_{n/2+2}$ (Theorem \ref{L-second-var}). For the spheres, the operator agrees with the ordinary GJMS operator, and it turns out that its kernel corresponds to a deformation via a family of projective linear transformations, which gives a trivial deformation of the conformal Codazzi manifold.
\medskip

This paper is organized as follows. In \S2, we review the projective tractor calculus and geometry of  
 strictly convex domains, including some tensor identities given in \cite{M}. In \S3, we define the GJMS operators and the $Q$-curvature using the ambient metric in conformal Codazzi geometry. 
Then we prove the self-adjointness of the GJMS operators and show that the total $Q$-curvature agrees with a multiple of the coefficient $L$ in the volume expansion of the Blaschke metric. 
Finally in \S4, we derive first and second variation formulas of the total $Q$-curvature under a deformation of strictly convex domains in a locally flat projective manifold.
\medskip

\noindent {\it Notations.} We adopt Einstein's summation convention and assume that 
\begin{itemize}
\item
uppercase Latin indices $I, J, K, \dots$ run from 0 to $n+1$;
\item
lowercase Latin indices
$i, j, k, \dots$
run from 1 to $n+1$; 
\item
lowercase Greek indices $\alpha,\beta,\gamma,\dots$ run from 1 to $n$. 
\end{itemize}
\medskip
\noindent{\bf Acknowledgments.} This paper is based on part of the author's thesis at the University of Tokyo. I would like to express my deep gratitude to Professor Kengo Hirachi for his continuous encouragement and helpful advice. I would also like to thank Dr. Yoshihiko Matsumoto, Professor Michael Eastwood, and Professor Bent \O rsted  for invaluable comments on the results. This research was partially supported by JSPS Fellowship and
 KAKENHI 13J06630.

\section{Preliminaries}
\subsection{Projective tractor calculus}
We quickly review the tractor calculus in projective differential geometry; We refer to \cite{BEG} for the detail of the constructions. A projective structure $[\nabla]$ on an oriented differentiable manifold $N$ is a projective equivalence class of torsion free affine connections, where two affine connections are said to be projectively equivalent when they have the same geodesic paths. It is known that $\nabla$ and $\nabla^\prime$ are projectively equivalent if and only if there exists a 1-form $p$ such that 
$$
\nabla^\prime_i X^j=\nabla_i X^j+p_iX^j+p_lX^l \d_i{}^j,
$$
for any $X^i\in\calE^i$, which we write as $\nabla^\prime=\nabla+p$. We define the 
{\it projective density bundle} of weight $w$ as an oriented real line bundle $\calE(w)=(\wedge^{n+1} T^\ast N)^{-(w/n+2)}$, where $n+1=\dim N$. A choice of a positive section $\tau\in\calE(1)$, called a {\it projective scale}, determines a unique representative connection $\nabla\in[\nabla]$ satisfying $\nabla\tau=0$. We deal with only such connections in $[\nabla]$. If we change the scale as $\wh\tau=e^{-\U}\tau$ with $\U\in C^\infty(N)$, the connection transforms as $\wh\nabla=\nabla+d\U$.
A projective structure is {\it locally flat} if the projective Weyl curvature 
$$
C_{ij}{}^k{}_l=R_{ij}{}^k{}_l-2\,\d_{[i}{}^kP_{j]l}
$$
vanishes, where $P_{ij}=(1/n)\Ric_{ij}$ is the {\it projective Schouten tensor}. This is equivalent to the condition that around each point in $N$ there exists a local {\it affine scale}, namely a projective scale such that the corresponding representative connection is flat. 

We define the {\it ambient space} $\wt N$ by the $\R_+$-bundle $\calE(-1)_+$ over $N$.
Then the {\it projective tractor bundle} is the rank $n+2$ vector bundle $\wt\calE^I=T\wt N/\R_+$ over $N$, where $s\in\R_+$ acts on $T\wt N$ by $s^{-1}(\d_s)_\ast$ with the dilation $\d_s$ on $\wt N$. A section $\nu^I\in\wt\calE^I(w):=\wt\calE^I\otimes\calE(w)$, called a {\it tractor}, can be identified with a vector filed on $\wt N$ homogeneous of degree $w-1$. If we take a projective scale $\tau$ and trivialize $\wt N$ by $\tau^{-1}$, the tractor bundle is decomposed as 
$$
\wt \calE^I\overset{\tau}{\cong}
\begin{matrix}
\calE^i(-1) \\
\oplus \\
\calE(-1),
\end{matrix}
$$
where $\calE^i$ denotes $TN$. Under a rescaling $\wh\tau=e^{-\U}\tau$, we have the transformation formula
\begin{equation*}
\begin{pmatrix}
\wh\nu^i \\
\wh\lam
\end{pmatrix}
=
\begin{pmatrix}
\nu^i \\
\lam-\U_k\nu^k
\end{pmatrix},
\end{equation*}
where $\U_k=d\U$. The tractor $T^I\in\wt\calE^I(1)$ expressed by ${}^t(0,1)$ does not depend on the choice of a projective scale and is called the {\it Euler field}. The {\it projective tractor connection} 
is a projectively invariant connection on $\wt\calE^I$ defined by
\begin{equation}\label{tract-conn}
\nabla_i
\begin{pmatrix}
\nu^j \\
\lam
\end{pmatrix}
=
\begin{pmatrix}
\nabla_i\nu^j+\lam\d_i{}^j \\
\nabla_i\lam-P_{ik}\nu^k
\end{pmatrix}.
\end{equation}
We also have the induced tractor connections on the dual bundle $\wt\calE_I$ or various tensor bundles such as $\wt\calE_I{}^J$, computed from \eqref{tract-conn}. The tractor connection is flat if and only if the projective structure $[\nabla]$ is locally flat.

There is another important projectively invariant differential operator: The {\it projective 
$D$-operator} $D_I : \wt\calE_\ast(w)\rightarrow\wt\calE_{I\ast}(w-1)$ is defined by 
$$
D_If_\ast=
\begin{pmatrix}
wf_\ast \\
\nabla_i f_\ast
\end{pmatrix}
\in\, \begin{matrix}
\wt\calE_\ast(w) \\
\oplus \\
 \wt\calE_{i\ast}(w)
\end{matrix}
\overset{\tau}{\cong}
 \wt\calE_{I\ast}(w-1),
$$
where `$\ast$' denotes arbitrary tractor indices and $\nabla$ in the second slot is the connection 
induced from the tractor connection and the connection on $\calE(w)$ associated with $\tau$. 
We have the following useful formulas:
$$
T^ID_If_\ast=w f_\ast, \quad D_IT^J=\d_I{}^J.
$$
The $D$-operator can be regarded as a linear connection on the ambient space $\wt N$ whose flatness is equivalent to the local flatness of $[\nabla]$.

\subsection{Geometry of strictly convex domains}
Let $(N, [\nabla])$ be an oriented locally flat projective manifold of dimension $n+1$. We consider a relatively compact domain $\Omega$ in $N$ with smooth boundary $M$. We say $M$ is {\it strictly convex} if $\nabla_i\nabla_j \r|_{TM}$ is positive definite for a connection $\nabla\in[\nabla]$ and 
a defining function $\r$ which is negative in $\Omega$. The strict convexity does not depend on the choice of $\nabla$ and $\r$. If we fix a projective scale $\tau\in\calE(1)$, there is a canonical choice of a transverse vector field $\xi\in\Gamma(M, TN)$ with $d\r(\xi)>0$, called the 
{\it affine normal field}, which satisfies
\begin{align*}
\nabla_X Y&=\nabla^\xi_X Y-h(X, Y)\xi, \\
\nabla_X \xi&=S(X),        
\end{align*}
for $X, Y\in\Gamma(TM)$, where $\nabla^\xi_X Y, \,S(X)\in\Gamma(TM)$ and $h$ is a Riemannian 
metric on $M$ such that $\bigl(\xi\lrcorner\,\tau^{-(n+2)}\bigr)|_{TM}=vol_h$ and is called the {\it affine metric}. The connection $\nabla^{\xi}$ is called the {\it induced connection} and we define the {\it Fubini--Pick form} by the difference tensor $A=\nabla^h-\nabla^\xi$, where $\nabla^h$ is the Levi--Civita connection of $h$.  The endomorphism $S$ is called the {\it affine shape operator}. For a rescaling $\wh\tau=e^{-\U}\tau$, we have 
\begin{align*}
\wh h&=e^{2\U}h, \quad \wh A=A, \\
\wh S(X)&=e^{-2\U}\bigl(S(X)+(\xi\U-|d\U|_h^2)X+d\U(X)\,{\rm grad}_h\U-\nabla^\xi_X {\rm grad}_h\U\bigr),
\end{align*}
where ${\rm grad}_h\U$ denotes the gradient vector field of $\U|_M$ with respect to the metric $h$. Thus the boundary $M$ is endowed with the conformal structure $[h]$. The {\it conformal density bundle} is a real line bundle $\calE[1]=(\wedge^n T^\ast M)^{-1/n}$ and we have a canonical isomorphism $\calE(1)|_M\rightarrow\calE[1]$ by 
$\tau|_M\mapsto (vol_h)^{-1/n}$. Thus we identify $\calE(w)|_M$ with $\calE[w]$ for each $w$.
The weighted tensor $\bh_{\a\b}=\tau^2|_M h_{\a\b}\in\calE_{(\a\b)}[2]$ is conformally invariant and called the {\it conformal metric}. We raise and lower the indices with $\bh_{\a\b}$ and its inverse $\bh^{\a\b}$. By the flatness of $[\nabla]$, the affine shape operator $S_{\a\b}$ is symmetric and the Fubini--Pick form $A_{\a\b\g}$ becomes trace-free and totally symmetric.  
It is known that $A_{\a\b\g}$ vanishes if and only if $M$ is locally projectively equivalent to the sphere; see \cite{NS}.

Take a Fefferman defining density $\br=\tau^2\r\in\calE(2)$ of $\Omega$. We introduce a special local frame for $TN$ as follows. First we take a vector field $\xi$ on a neighborhood of $M$ satisfying  
\begin{equation}\label{xi}
\xi \r=1, \qquad \xi^i X^j\nabla_i\nabla_j\r=0 \quad {\rm if}\quad X\r=0.
\end{equation}
We set $r:=\xi^i\xi^j\nabla_i\nabla_j\r$ and call $r$ the {\it transverse curvature}. Since $\br$ satisfies \eqref{real-MA}, $\xi|_M$ coincides with the affine normal field (\cite[Lemma 4.5]{M}). Then 
we extend a local frame $\{e_\a\}$ for $TM$ by the parallel transport along the integral curves of $\xi$ with respect to $\nabla$. We call a local frame $\{e_\infty=\xi, e_\a\}$ thus obtained an {\it adapted frame}. In such a frame, the connection forms of $\nabla$ are given by
\begin{equation}\label{conn-adapted}
\begin{aligned}
\omega_\a{}^\b&=\omega^\xi{}_\a{}^\b, & \omega_\infty{}^\a&=S_\g{}^\a\th^\g+C^\a d\r, \\ 
\omega_\b{}^\infty&=-h_{\b\g}\th^\g, & \omega_\infty{}^\infty&=-r d\r,
\end{aligned}
\end{equation}
where $C^\a$ is a function, and $\omega^\xi{}_\a{}^\b$ restricts to the connection form of the induced connection on each level set and satisfies $\omega^\xi{}_\a{}^\b(\xi)=0$. Moreover, the ambient metric $\wt g_{IJ}=D_ID_J\br$ satisfies 
\begin{equation}\label{g-M}
\wt g_{IJ}|_M
=\begin{pmatrix}
0 & 1 & \quad \\
1 & \boldsymbol{r}|_M & \quad \\
\quad & \quad & \bh_{\a\b}
\end{pmatrix},
\quad 
\wt g^{IJ}|_M
=\begin{pmatrix}
-\boldsymbol{r}|_M  & 1 & \quad \\
1 & 0 & \quad \\
\quad & \quad & \bh^{\a\b}
\end{pmatrix},
\end{equation}
where $\boldsymbol{r}:=\tau^{-2}r$. We recall from \cite{M} that in an adapted frame, the components of the projective Schouten tensor and the affine shape operator satisfy 
\begin{align}
(n-1)P_{\a\b}|_M-{\rm Ric}^h_{\a\b}+(\d A)_{\a\b}+A_{\a\mu\nu}A_{\b}{}^{\mu\nu}-S_{\a\b}+({\rm tr}S)h_{\a\b}&=0, \label{Pab-M} \\
\xi\, S_\a{}^\b+S_\a{}^\g S_\g{}^\b+r S_\a{}^\b-\nabla^\xi_\a C^\b+\d_\a{}^\b P_{\infty\infty}&=0,
\label{xi-S} 
\end{align}
where $\Ric^h$ is the Ricci tensor of $h$ and $(\d A)_{\a\b}=\nabla^h_\g A_{\a\b}{}^\g$. Also, the transverse curvature satisfies the following equation:
\begin{multline}\label{r}
(n+2)r+\xi\log
\bigl(1-2r\r-4\r^2P_{\infty\infty}+8\r^3\,\wt h^{\g\mu}P_{\g\infty}P_{\mu\infty}\bigr) \\
+\xi\log\det\bigl(\d_\a{}^\b+2\r h^{\b\g}P_{\g\a}\bigr)-{\rm tr}S-h^{\a\b}P_{\a\b} \\
=-\Bigl(\frac{n}{2}+1\Bigr)\underline{\calO}\,\r^{n/2}+O(\r^{n/2+1}),
\end{multline}
where $\wt h^{\a\b}$ is the inverse of $\wt h_{\a\b}=h_{\a\b}+2\r P_{\a\b}$ and 
$\underline{\calO}=\tau^{n+2}\calO$. Setting $\r=0$ in this equation gives
\begin{equation}\label{r-M}
r|_M=\frac{2}{n}{\rm tr}S-\frac{1}{n(n-1)}\bigl(\scal_h-|A|^2\bigr),
\end{equation} 
where $\scal_h$ is the scalar curvature of $h$. 

Let $\wt\Delta=-\wt g^{IJ}\wt\nabla_I\wt\nabla_J$ be the Laplacian of $\wt g_{IJ}$. A projective scale $\tau$ is called a {\it harmonic scale} when it satisfies $\wt\Delta\tau=O(\br)$. In such a scale, the identity 
\begin{equation}\label{r-trS}
r|_M=\frac{1}{n}{\rm tr}S=\frac{1}{n(n-1)}(\scal_h-|A|^2)
\end{equation}
holds (\cite[Proposition 5.10]{M}).

\section{GJMS operators and $Q$-curvature}
\subsection{GJMS construction}
Let $M$ be the boundary of a strictly convex domain $\Omega$ in an $(n+1)$-dimensional  locally flat projective manifold $N$. Let $\wt\nabla_I$ be the Levi--Civita connection of the ambient metric $\wt g_{IJ}=D_ID_J\br$, where $\br$ is a Fefferman defining density of $\Omega$. Since $D_I$ is flat, we have 
$\wt\nabla_I=D_I+\wt\Gamma_{IJ}{}^K$ with
$$
\wt\Gamma_{IJK}=\frac{1}{2}D_ID_JD_K\br\in\wt\calE_{(IJK)}(-1).
$$
When $n$ is even the Monge--Amp\`{e}re equation $\det \wt g=-1+\mathcal{O}\br^{n/2+1}$ implies 
\begin{equation}\label{trGamma}
\wt\Gamma_{IK}{}^K=-\frac{1}{2}\Bigl(\frac{n}{2}+1\Bigr)\mathcal{O}\br^{n/2}D_I\br
-\frac{1}{2}\br^{n/2+1}D_I\mathcal{O}+O(\br^{n/2+2}),
\end{equation}
while the right-hand side is replaced by $O(\br^\infty)$ when $n$ is odd. 
Since $T^I\wt\Gamma_{IJK}=0$, we have $\wt\Delta\br=-(n+2)$. 

As in conformal and CR cases, $\wt\Delta$ satisfies the following commutation relations:
\begin{lem}
For a density $\wt f\in\calE(w)$ and a positive integer $m$, we have
\begin{equation}\label{Lap-comm}
\begin{aligned}[ ]
[\wt\Delta, \br^m]\wt f&=-m(2w+2m+n)\br^{m-1}\wt f, \\
[\wt\Delta^m, \br]\wt f&=-m(2w-2m+n+4)\wt\Delta^{m-1}\wt f.
\end{aligned}
\end{equation}
\end{lem}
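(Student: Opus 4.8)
The plan is to reduce both identities to a small set of homogeneity relations tying $\br$ to the ambient metric, then to prove the first by a direct Leibniz computation and the second by induction on $m$.

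First I would record the inputs. Since $\br$ and $\wt f$ carry no tractor indices, $\wt\nabla_I\br=D_I\br$ and $\wt\nabla_I\wt f=D_I\wt f$. Differentiating the Euler relation $T^ID_I\br=2\br$ and using $D_IT^J=\d_I{}^J$ gives
$$
\wt g_{IJ}T^J=(D_ID_J\br)\,T^J=D_I\br,
$$
hence $\wt g^{IJ}D_J\br=T^I$ and therefore $\wt g^{IJ}(D_I\br)(D_J\br)=T^ID_I\br=2\br$. Combined with the given identity $\wt\Delta\br=-(n+2)$ and with $T^I\wt\nabla_I\wt f=T^ID_I\wt f=w\wt f$, these are the only ingredients needed. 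I would also stress at this point that $\wt\nabla_I$ lowers weight by one (it is built from $D_I$), so that $\wt\Delta^m\wt f\in\calE(w-2m)$; this fact is essential to the second identity.

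For the first identity I would use the general product rule $[\wt\Delta,\phi]\wt f=(\wt\Delta\phi)\wt f-2\wt g^{IJ}(\wt\nabla_I\phi)(\wt\nabla_J\wt f)$ with $\phi=\br^m$. Expanding $\wt\Delta\br^m=-\wt g^{IJ}\wt\nabla_I\wt\nabla_J\br^m$ by Leibniz produces a term $-m(m-1)\br^{m-2}\wt g^{IJ}(D_I\br)(D_J\br)=-2m(m-1)\br^{m-1}$ and a term $m\br^{m-1}\wt\Delta\br=-m(n+2)\br^{m-1}$, so $\wt\Delta\br^m=-m(2m+n)\br^{m-1}$. The gradient term is $-2m\br^{m-1}\wt g^{IJ}(D_I\br)(\wt\nabla_J\wt f)=-2m\br^{m-1}\,T^I\wt\nabla_I\wt f=-2mw\,\br^{m-1}\wt f$. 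Adding these gives $-m(2w+2m+n)\br^{m-1}\wt f$, as claimed.

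For the second identity I would induct on $m$, the base case $m=1$ being the first identity at exponent one. The inductive step uses the commutator Leibniz rule
$$
[\wt\Delta^{m+1},\br]=\wt\Delta\,[\wt\Delta^m,\br]+[\wt\Delta,\br]\,\wt\Delta^m .
$$
Applied to $\wt f\in\calE(w)$, the first summand contributes $-m(2w-2m+n+4)\wt\Delta^m\wt f$ by the inductive hypothesis, while the second is obtained by applying the $m=1$ case to $\wt\Delta^m\wt f\in\calE(w-2m)$, giving $-(2(w-2m)+n+2)\wt\Delta^m\wt f$. One then checks that the coefficients sum to $-(m+1)(2w-2m+n+2)$, which is precisely the asserted coefficient at level $m+1$. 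I expect the only genuine subtlety to be exactly this weight bookkeeping: overlooking that $\wt\Delta$ lowers the weight by two, and hence that the base commutator must be applied at weight $w-2m$ rather than $w$, would break the recursion. Everything else is a routine algebraic verification once the homogeneity relations above are in hand.
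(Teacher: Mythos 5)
Your proof is correct. The paper states this lemma without proof, appealing to the analogy with the conformal and CR cases; your argument --- deriving $\wt g^{IJ}D_J\br=T^I$ from the Euler relation $T^ID_I\br=2\br$, computing $[\wt\Delta,\br^m]$ directly from the product rule together with $\wt\Delta\br=-(n+2)$, and then obtaining $[\wt\Delta^m,\br]$ by induction while correctly applying the base case at the shifted weight $w-2m$ --- is exactly the standard computation used in those references, so it supplies the omitted proof in essentially the intended way.
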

By using the above lemma, we can construct conformally invariant differential operators as follows: 
Let $f\in\calE[w]$ be a conformal density on $M$. We extend $f$ to a density $\wt f\in\calE(w)$ and apply $\wt\Delta^{m}$. 
By \eqref{Lap-comm}, we have
$$
\wt\Delta^m (\wt f+\phi\br)=\wt\Delta^m\wt f-m(2w-2m+n)\wt\Delta^{m-1}\phi+\br\wt\Delta^m\phi
$$
for any density $\phi\in\calE(w-2)$. Therefore $\wt\Delta^m\wt f|_M$ is independent of the choice of an 
extension when $w=m-n/2$, and defines a differential operator
$$
P_m:\calE[m-n/2]\longrightarrow\calE[-m-n/2],
$$
which we call the {\it (conformal Codazzi) GJMS operator}. 

The following proposition is proved in the same way as in 
\cite{GJMS} and \cite{FH}:
\begin{prop}\label{harm-ext}
Let $f\in\calE[m-n/2]$.

{\rm (i)} There exists an extension $\wt f\in\calE(m-n/2)$ of $f$ such that 
$$
\wt\Delta\wt f=O(\br^{m-1}).
$$
Moreover, such an extension is unique modulo $O(\br^m)$ and satisfies $c_m\br^{1-m}\wt\Delta\wt f|_M= P_m f$ with $c_m=2^{m-1}\{(m-1)!\}^2$. 

{\rm (ii)} Let $\tau\in\calE(1)$ be a projective scale and set $\r=\tau^{-2}\br$. Then there exist $A\in\calE(m-n/2)$ and $B\in\calE(-m-n/2)$ such that $A|_M=f$ and 
$$
\wt\Delta(A+B\br^m\log|\r|)=O(\br^\infty).
$$
Moreover, $A$ and $B$ are unique modulo $O(\br^m)$ and $O(\br^\infty)$ respectively, and it holds that 
$c_m^\prime B|_M=P_m f$ with $c_m^\prime=-2^m m\{(m-1)!\}^2$.
\end{prop}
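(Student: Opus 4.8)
The plan is to reduce both parts to a single computation: the action of $\wt\Delta$ on $\br^s$ times a density of pure weight, for \emph{arbitrary real} $s$. Using $\wt\Delta\br=-(n+2)$ together with the identity $\wt g^{IJ}D_I\br=T^J$ (whence $\wt g^{IJ}D_I\br\,D_J\br=T^JD_J\br=2\br$ and $\wt g^{IJ}D_I\br\,D_J\chi=T^JD_J\chi=u\chi$ for $\chi\in\calE(u)$), the Leibniz rule for $\wt\Delta=-\wt g^{IJ}\wt\nabla_I\wt\nabla_J$ gives, for every real $s$,
\[
\wt\Delta(\br^s\chi)=-s(2s+2u+n)\,\br^{s-1}\chi+\br^s\wt\Delta\chi,
\]
which refines the integer statement \eqref{Lap-comm}. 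For the relevant weight $w=m-n/2$ the coefficient becomes $-2k(m-k)$, nonzero precisely away from $k=0$ and $k=m$; the vanishing at $k=m$ is the resonance responsible for the log term in (ii).

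For (i) I would start from any extension and remove the obstruction order by order: having arranged $\wt\Delta\wt f=O(\br^{k-1})$ with $1\le k\le m-1$, I add $\phi_k\br^k$ with $\phi_k|_M$ chosen to cancel the $\br^{k-1}$ coefficient, which is possible since $-2k(m-k)\ne0$; after $k=m-1$ this yields $\wt\Delta\wt f=O(\br^{m-1})$, and the same nonvanishing forces uniqueness modulo $O(\br^m)$. To identify the boundary operator I take $\wt f$ itself as the extension in $P_mf=\wt\Delta^m\wt f|_M=\wt\Delta^{m-1}(\wt\Delta\wt f)|_M$; writing $\wt\Delta\wt f=\br^{m-1}\eta+O(\br^m)$ and applying the displayed formula repeatedly gives $\wt\Delta^{\,j}(\br^{m-1}\eta)=c_j\br^{m-1-j}\eta+O(\br^{m-j})$ with $c_{j+1}=2(j+1)(m-1-j)c_j$, so $c_{m-1}=2^{m-1}\{(m-1)!\}^2=c_m$; restricting to $M$ yields $P_mf=c_m\,\eta|_M=c_m\br^{1-m}\wt\Delta\wt f|_M$.

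For (ii) the point is that the step $k=m$ of the above iteration fails because $-2m(m-m)=0$: the residual term $\br^{m-1}\eta$ in $\wt\Delta\wt f$ cannot be removed by a power of $\br$. Instead I cancel it with a log term, for which I need $\wt\Delta(\br^m\log|\r|\,B)$. I would compute this by analytic continuation in the exponent: with $\chi_s:=\tau^{2(m-s)}B\in\calE(m-n/2-2s)$ the family $\br^s\chi_s$ lies in the fixed bundle $\calE(m-n/2)$ and satisfies $\br^m\log|\r|\,B=\partial_s|_{s=m}(\br^s\chi_s)$; since $\wt\Delta$ is an $s$-independent operator on this bundle it commutes with $\partial_s$, and differentiating the master formula (coefficient $-2s(m-s)$) at $s=m$ produces the leading term $2m\,\br^{m-1}B$, with no $\br^{m-1}\log|\r|$ term because $-2s(m-s)$ vanishes to first order at $s=m$. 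Choosing $B|_M=-\eta|_M/(2m)$ removes the obstruction. All further obstructions are non-resonant: non-log ones at orders $\ge\br^m$ are cleared by power corrections $\phi\br^k$ with $k\ge m+1$ (coefficient $-2k(m-k)\ne0$), and log ones at orders $\ge\br^m\log|\r|$ by corrections $\psi\br^{m+j}\log|\r|$ with $j\ge1$ (coefficient $2js\ne0$); hence the formal series closes to give $\wt\Delta(A+B\br^m\log|\r|)=O(\br^\infty)$, with $B$ determined modulo $O(\br^\infty)$ and $A$ modulo $O(\br^m)$ exactly as in (i). Combining with (i), $P_mf=c_m\,\eta|_M=c_m(-2m)B|_M=-2^m m\{(m-1)!\}^2\,B|_M$, i.e. $c_m'B|_M=P_mf$.

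The main obstacle is the log computation in (ii): keeping the weight fixed while differentiating in the power—so that $\wt\Delta$ genuinely commutes with $\partial_s$—is what makes both the vanishing of the $\log|\r|$ coefficient and the value $2m$ of the non-log coefficient transparent, and it is easy to get the constant or its sign wrong by differentiating $\log|\br|$ naively. The remaining work, namely verifying that the interleaved power and log corrections indeed close to $O(\br^\infty)$ and tracking $\eta$ consistently between the two parts, is routine given the non-resonance at all orders beyond the base.
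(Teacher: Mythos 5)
Your proposal is correct and takes essentially the same route as the paper's proof, which is simply deferred to the standard order-by-order arguments of \cite{GJMS} and \cite{FH} built on the commutation relations \eqref{Lap-comm}: inductively remove obstructions using the nonvanishing of the coefficient $-2k(m-k)$ for $1\le k\le m-1$ (and $k\ge m+1$), handle the resonance at $k=m$ with the $\br^m\log|\r|$ term, and read off the constants $c_m=2^{m-1}\{(m-1)!\}^2$ and $c_m'=-2m\,c_m$ from the resulting recursions, all of which you do correctly. Your real-exponent master formula and the differentiation-in-$s$ device for computing $\wt\Delta(B\br^m\log|\r|)$ are a clean repackaging of those standard computations rather than a genuinely different method.
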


Next  we examine the dependence on the choice of a Fefferman defining density. When $n$ is odd, the Fefferman defining density  is unique to infinite order, so $P_m$ does not depend on 
the choice of $\br$. On the other hand, we must restrict the range of $m$ when $n$ is even: 

\begin{prop}
Let $n$ be even. Then the GJMS operator $P_m$ is independent of the choice of a Fefferman defining
density $\br$ if $1\le m\le n/2$. 
\end{prop}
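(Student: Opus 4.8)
The plan is to fix two Fefferman defining densities $\br$ and $\br'$ and to show that they produce the same operator in the stated range. Since for even $n$ the Fefferman defining density is unique modulo $O(\br^{n/2+2})$, we may write $\br'=\br+\phi$ with $\phi=O(\br^{n/2+2})$. Let $\wt\Delta$ and $\wt\Delta'$ be the ambient Laplacians of $\wt g_{IJ}=D_ID_J\br$ and $\wt g_{IJ}'=D_ID_J\br'$, and let $P_m$, $P_m'$ be the corresponding GJMS operators. An extension $\wt f\in\calE(m-n/2)$ of a given $f\in\calE[m-n/2]$ makes no reference to the choice of defining density, so I would compute both $P_mf=\wt\Delta^m\wt f|_M$ and $P_m'f=(\wt\Delta')^m\wt f|_M$ using the \emph{same} $\wt f$; the well-definedness established above guarantees these are independent of the extension. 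Thus it suffices to prove $\wt\Delta^m\wt f|_M=(\wt\Delta')^m\wt f|_M$.

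First I would estimate the order of the difference operator $E:=\wt\Delta'-\wt\Delta$. From $\phi=O(\br^{n/2+2})$ and the definition of $D_I$ one gets $D_ID_J\phi=O(\br^{n/2})$, so the two metrics agree to order $n/2$ along $M$ and hence so do their inverses, giving $\wt g'^{IJ}-\wt g^{IJ}=O(\br^{n/2})$ for the second-order part. For the first-order part, writing $\wt\Delta\wt f=-\wt g^{IJ}D_ID_J\wt f+\wt g^{IJ}\wt\Gamma_{IJ}{}^KD_K\wt f$, the relevant coefficient is $\wt g^{IJ}\wt\Gamma_{IJ}{}^K$, which by the total symmetry of $\wt\Gamma_{IJK}$ equals $\tfrac12\wt g^{KL}D_L\log|\det\wt g|$; by the Monge--Amp\`ere equation $\det\wt g=-1+\calO\br^{n/2+1}$ (cf.\ \eqref{trGamma}) this is $O(\br^{n/2})$ for both $\br$ and $\br'$, so the difference of the first-order parts is again $O(\br^{n/2})$. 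Consequently every coefficient of $E$ vanishes to order $n/2$ along $M$, and hence $E\psi=O(\br^{n/2})$ for any smooth density $\psi$.

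Next I would telescope the difference,
\begin{equation*}
(\wt\Delta')^m-\wt\Delta^m=\sum_{j=0}^{m-1}(\wt\Delta')^j\,E\,\wt\Delta^{m-1-j},
\end{equation*}
and apply it to $\wt f$. In the $j$-th term, $\wt\Delta^{m-1-j}\wt f$ is a smooth density, so $E\,\wt\Delta^{m-1-j}\wt f=O(\br^{n/2})$. The crucial point is that each subsequent application of $\wt\Delta'$ lowers the order of vanishing along $M$ by \emph{exactly one}: since $\br'$ is also a defining density, the notion of vanishing order coincides for $\br$ and $\br'$, and the commutation relation \eqref{Lap-comm} applied with $\br'$ shows that the naive $O(\br^{k-2})$ contribution cancels, leaving $\wt\Delta'\bigl(O(\br^k)\bigr)=O(\br^{k-1})$. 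Iterating, the $j$-th term is $O(\br^{n/2-j})$, and since $0\le j\le m-1$ with $m\le n/2$ we have $n/2-j\ge n/2-(m-1)\ge1$. Hence every term is divisible by $\br$ and vanishes along $M$, so $\wt\Delta^m\wt f|_M=(\wt\Delta')^m\wt f|_M$ and therefore $P_m=P_m'$.

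The main obstacle is the order bookkeeping in the last step: a generic second-order operator applied to $O(\br^k)$ drops the order by two, which for $j$ near $m-1$ would leave nonzero boundary contributions and wreck the argument. What rescues it is precisely the eikonal structure of the ambient metric encoded in \eqref{Lap-comm}, which ensures only a single order is lost per Laplacian; the hypothesis $m\le n/2$ is exactly the threshold that keeps all $m$ terms of the telescoped sum divisible by $\br$. For $m>n/2$ one application of $\wt\Delta'$ too many is permitted and the obstruction term in $\det\wt g$ survives to the boundary, which is what prevents extending the range.
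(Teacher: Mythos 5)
Your proposal is correct, and it reaches the conclusion by a genuinely different mechanism than the paper, although both start from the same estimate. The shared part: from $\br'=\br+O(\br^{n/2+2})$ one gets that $\wt\Delta'-\wt\Delta$ is a second-order operator all of whose coefficients are $O(\br^{n/2})$; your verification of the first-order part via the total symmetry of $\wt\Gamma_{IJK}$ and the Monge--Amp\`ere equation is exactly the content of \eqref{trGamma}. The paper then finishes in one stroke by invoking Proposition \ref{harm-ext}~(i): it takes the almost-harmonic extension with $\wt\Delta\wt f=\psi\br^{m-1}$, notes that for $m\le n/2$ the perturbed Laplacian gives $\wt\Delta'\wt f=(\psi+O(\br))\br^{m-1}$, and reads off $P'_mf=P_mf$ from the characterization of the GJMS operator in terms of such extensions --- so only a single application of the Laplacian is analyzed, and all iteration bookkeeping is hidden inside that proposition. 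You instead work with an arbitrary extension and the raw definition $P_mf=\wt\Delta^m\wt f|_M$, telescope $(\wt\Delta')^m-\wt\Delta^m$, and use the commutation relation \eqref{Lap-comm} (equivalently, the eikonal identity $\wt g^{IJ}D_I\br\, D_J\br=2\br$) to show that each application of $\wt\Delta'$ costs only one power of $\br$, so that every term of the sum still vanishes on $M$ precisely when $m\le n/2$. Your route is more self-contained --- it does not require the existence of almost-harmonic extensions --- and it makes explicit both where the threshold $m\le n/2$ bites and why only one order is lost per Laplacian; the price is re-deriving bookkeeping that Proposition \ref{harm-ext}~(i) already packages. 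Two points you rightly flag but should keep explicit in a final write-up: first, $O(\br^k)=O(\br'^k)$ because $\br/\br'$ is smooth and positive near $M$, so \eqref{Lap-comm} may be applied for the pair $(\wt\Delta',\br')$; second, the iteration is legitimate because each step returns a density of the exact form $\br'^{k-1}$ times a smooth density, not merely something vanishing to order $k-1$.
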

\begin{proof}
Let $\br^\prime$ be another Fefferman defining density and $\wt g_{IJ}^\prime$ the associated ambient 
metric. Since $\br^\prime=\br+O(\br^{n/2+2})$, the Laplacian of $\wt g_{IJ}^\prime$ satisfies 
$\wt\Delta^\prime=-{\wt g}^{\prime IJ} D_ID_J+O(\br^{n/2})=\wt\Delta+O(\br^{n/2})$. Let $f\in\calE[m-n/2]$. We extend $f$ to $\wt f\in\calE(m-n/2)$ so that 
$$
\wt\Delta\wt f=\psi \br^{m-1}, \quad \psi|_M=P_m f
$$
holds. Then $\wt f$ satisfies $\wt\Delta^\prime \wt f =(\psi+O(\br))\br^{m-1}$ if $m\le n/2$, so we have 
$P_m^\prime=P_m$.
\end{proof}

By following the CR case (\cite{HMM}), we will introduce a further normalization on $\br$ so that $P_{n/2+2}$ is well-defined for $n$ even.

\begin{lem}\label{strict}
Let $n$ be even. Then there exists a defining density $\br$ of $\Omega$ such that 
$$
\calJ[\br]=-1+{\mathcal{O}}\br^{n/2+1}, \quad \wt\Delta{\mathcal{O}}=O(\br),
$$
where $\wt\Delta$ is the Laplacian of $\wt g_{IJ}=D_ID_J\br$. Moreover, a defining density $\br^\prime$ satisfies the same equations if and only if there exists $\phi\in\calE(-n-2)$ such that 
$\br^\prime=\br+\phi\br^{n/2+2}$ and $\wt\Delta\phi=O(\br)$.
\end{lem}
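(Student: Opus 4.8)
The plan is to reduce both assertions to a single transformation law for the boundary value $(\wt\Delta\calO)|_M$ under the residual gauge freedom in a Fefferman defining density, namely $\br\mapsto\br'=\br+\phi\br^{n/2+2}$ with $\phi\in\calE(-n-2)$. The starting observation is that $n/2+2$ is the critical exponent for \eqref{Lap-comm}: taking $m=n/2+2$ and $w=-n-2$ gives $2w+2m+n=0$, so the commutator $[\wt\Delta,\br^{n/2+2}]\phi$ vanishes and
\[
\wt\Delta(\phi\br^{n/2+2})=\br^{n/2+2}\wt\Delta\phi .
\]
Thus the admissible perturbations are exactly those on which $\wt\Delta$ behaves tensorially, and this is what makes the normalization $\wt\Delta\calO=O(\br)$ interact cleanly with the gauge freedom. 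Throughout I write $\calO:=(\calJ[\br]+1)\,\br^{-(n/2+1)}$, the genuine smooth quotient attached to $\br$, so that $\wt\Delta\calO$ is meaningful.

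Next I would linearize the Monge--Amp\`ere operator. Since $\wt g_{IJ}=D_ID_J\br$ is the Hessian, $\delta\log|\calJ[\br]|=\wt g^{IJ}D_ID_J(\delta\br)$, and with $\psi:=\phi\br^{n/2+2}$ the decomposition $\wt\nabla_I=D_I+\wt\Gamma_{IJ}{}^K$ together with the trace estimate \eqref{trGamma} and $D_K\psi=O(\br^{n/2+1})$ gives $\wt g^{IJ}D_ID_J\psi=-\wt\Delta\psi+O(\br^{n+1})=-\br^{n/2+2}\wt\Delta\phi+O(\br^{n+1})$. The second variation of $\log\det$ contributes $-\tfrac12\,\wt g^{IK}\wt g^{JL}(D_ID_J\psi)(D_KD_L\psi)$; using $\wt g^{IJ}D_I\br D_J\br=2\br$ and $T^ID_ID_J\psi=D_J\psi=O(\br^{n/2+1})$, together with the fact that the largest (normal--normal) part of $D_ID_J\psi$ is proportional to $D_I\br\,D_J\br$ and therefore cancels in these contractions, one checks that this term is $O(\br^{n+2})$, and all higher variations are smaller still. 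Combining with $\calJ[\br]+1=\calO\br^{n/2+1}$ I would conclude
\[
\calO'=\calO+\br\,\wt\Delta\phi+O(\br^2),\qquad \calO':=(\calJ[\br']+1)\,{\br'}^{-(n/2+1)} .
\]

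The crux is then to restrict $\wt\Delta\calO'$ to $M$. Applying $\wt\Delta$ and using \eqref{Lap-comm} once more gives $\wt\Delta(\br\,\wt\Delta\phi)|_M=(n+6)(\wt\Delta\phi)|_M$, while any genuinely $O(\br^2)$ term $\br^2 g$ satisfies $\wt\Delta(\br^2 g)=O(\br)$ by \eqref{Lap-comm} and so contributes nothing on $M$; finally, passing from $\br$ to $\br'$ changes $\wt g^{IJ}$, hence $\wt\Delta$, only by $O(\br^{n/2})$, which again does not affect restrictions to $M$. This yields the transformation law $(\wt\Delta\calO')|_M=(\wt\Delta\calO)|_M+(n+6)(\wt\Delta\phi)|_M$, with $n+6\neq0$. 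Both halves of the lemma follow. For existence I would take any Fefferman defining density $\br_0$ and choose $\phi$ whose $2$-jet at $M$ realizes $(\wt\Delta\phi)|_M=-\tfrac{1}{n+6}(\wt\Delta\calO_0)|_M$, which is possible since $\wt\Delta$ is a nondegenerate second-order operator and hence $\phi\mapsto(\wt\Delta\phi)|_M$ is onto; then $\br=\br_0+\phi\br_0^{n/2+2}$ is Fefferman and satisfies $\wt\Delta\calO=O(\br)$. For the characterization, any $\br'$ solving both equations is in particular Fefferman, so by uniqueness of the Fefferman density modulo $O(\br^{n/2+2})$ we may write $\br'=\br+\phi\br^{n/2+2}$ with $\phi\in\calE(-n-2)$; the transformation law then shows $\wt\Delta\calO'=O(\br)$ holds if and only if $(\wt\Delta\phi)|_M=0$, i.e.\ $\wt\Delta\phi=O(\br)$, and conversely every such $\phi$ produces an admissible $\br'$.

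I expect the main obstacle to be the order bookkeeping in the second paragraph: verifying that the nonlinear remainder in the Monge--Amp\`ere expansion, and the $O(\br^2)$ error in $\calO'-\calO$, are high enough in $\br$ that only the single term $(n+6)(\wt\Delta\phi)|_M$ survives on $M$. The mechanism that saves the day --- and which must be made precise, especially in the lowest dimension $n=4$ --- is the repeated cancellation coming from $\wt g^{IJ}D_I\br D_J\br=2\br$ and the weighted identity $\wt\Delta(\phi\br^{n/2+2})=\br^{n/2+2}\wt\Delta\phi$, which together force every nonlinear correction to vanish to order at least $\br^{n+1}$.
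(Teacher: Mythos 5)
Your proposal follows essentially the same route as the paper's proof: everything hinges on the transformation law $\wt\Delta^\prime\calO^\prime=\wt\Delta\calO+(n+6)\wt\Delta\phi+O(\br)$ under the residual gauge $\br\mapsto\br+\phi\br^{n/2+2}$ (equation \eqref{Lap-O} in the paper), and both existence and the characterization are read off from it exactly as you do. The one substantive difference is that you derive the intermediate formula $\calO^\prime=\calO+\br\wt\Delta\phi+O(\br^2)$ by linearizing $\log\det$ and estimating the nonlinear remainders, whereas the paper simply quotes it from \cite[Lemma~3.2]{M}; your bookkeeping there is sound (the second variation term is in fact $O(\br^{n+2})$, and even your weaker $O(\br^{n+1})$ suffices since $n\ge4$), so this part is a correct self-contained derivation of the cited formula.

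There is, however, one flawed justification in your existence step. You assert that $\phi\mapsto(\wt\Delta\phi)|_M$ is onto ``since $\wt\Delta$ is a nondegenerate second-order operator.'' That is not the right reason: $\wt\Delta$ is \emph{characteristic} along $M$, because the coefficient of the double normal derivative is $\wt g^{IJ}D_I\br\, D_J\br=2\br$, which vanishes on $M$ --- a cancellation you yourself exploit elsewhere in the argument. So nondegeneracy of the symbol cannot give surjectivity of the boundary-value map. The claim is nevertheless true, and the correct one-line proof is the very commutator identity you use throughout: for any $\wt g\in\calE(-n-4)$ one has $\wt\Delta(\br\,\wt g\,)|_M=(n+6)\,\wt g|_M$ with $n+6\neq0$, so $\phi=(n+6)^{-1}\br\,\wt g$ realizes any prescribed boundary value; this is the indicial-root mechanism, not ellipticity. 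The paper sidesteps the issue by exhibiting $\phi=-(n+6)^{-2}\br\wt\Delta\calO$ explicitly, which is precisely this construction with $\wt g=-(n+6)^{-1}\wt\Delta\calO$. With that repair, your argument is complete and matches the paper's.
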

\begin{proof}
Take a defining density $\br$ such that $\calJ[\br]=-1+\mathcal{O}
\br^{n/2+1}$. Then any Fefferman defining density is written in the form $\br^\prime=\br+\phi\br^{n/2+2}$ 
with a density $\phi\in\calE(-n-2)$. By using the formula in \cite[Lemma 3.2]{M}, we have 
$$
\calJ[\br^\prime]=-1+(\mathcal{O}+\br\wt\Delta\phi)\br^{n/2+1}+O(\br^{n/2+3})
$$
so the obstruction density of $\br^\prime$ satisfies $\mathcal{O}^\prime=\mathcal{O}+\br\wt\Delta\phi+O(\br^2)$. Then the commutation relation \eqref{Lap-comm} gives
\begin{equation}\label{Lap-O}
\wt\Delta^\prime\mathcal{O}^\prime=\wt\Delta\mathcal{O}+(n+6)\wt\Delta\phi+O(\br).
\end{equation}
Thus, setting $\phi=-(n+6)^{-2}\br\wt\Delta\mathcal{O}$, we obtain $\br^\prime$ such that $\wt\Delta^\prime\mathcal{O}^\prime=O(\br)$. The second statement of the lemma also follows from \eqref{Lap-O}.
\end{proof}

A defining density given as in the above lemma is called a {\it strict Fefferman defining density}. 

\begin{prop}
When $n$ is even, the operators $P_{n/2+2}$ is independent of the choice of a strict Fefferman defining density.
\end{prop}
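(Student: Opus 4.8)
The plan is to start from Lemma~\ref{strict}: any two strict Fefferman defining densities satisfy $\br'=\br+\phi\br^{n/2+2}$ with $\phi\in\calE(-n-2)$ and the extra normalization $\wt\Delta\phi=O(\br)$. Set $m=n/2+2$, so $P_m$ acts on $f\in\calE[w]$ with $w=m-n/2=2$. Since $\wt\Delta^m\wt f|_M=P_mf$ for \emph{any} extension $\wt f\in\calE(2)$ of $f$, and likewise $(\wt\Delta')^m\wt f|_M=P'_mf$ for the ambient metric $\wt g'_{IJ}=D_ID_J\br'$, I would fix one common extension and write
\[
P'_mf-P_mf=\bigl[(\wt\Delta')^m-\wt\Delta^m\bigr]\wt f\big|_M .
\]

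First I would identify the perturbation $E:=\wt\Delta'-\wt\Delta$ to leading order. Using that $D_I$ obeys the Leibniz rule (weights add) and the identity $\wt g_{IJ}T^J=D_I\br$ (immediate from $D_IT^J=\d_I{}^J$ and $T^JD_J\br=2\br$, so that $\wt g^{IJ}D_J\br=T^I$), one gets $\wt g'_{IJ}-\wt g_{IJ}=m(m-1)\phi\br^{n/2}(D_I\br)(D_J\br)+O(\br^{n/2+1})$, hence
\[
E=m(m-1)\,\phi\,\br^{n/2}\,T^IT^JD_ID_J+\bigl(\text{operator with coefficients }O(\br^{n/2+1})\bigr).
\]
Crucially, the first-order (trace) part of $E$ is governed by \eqref{trGamma} together with the strictness consequence $\calO'-\calO=\br\wt\Delta\phi+O(\br^2)=O(\br^2)$, so it only contributes at order $O(\br^{n/2+2})$. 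I would then telescope $(\wt\Delta')^m-\wt\Delta^m=\sum_{k=0}^{m-1}(\wt\Delta')^{m-1-k}E\wt\Delta^k$ and take $\wt f$ to be the $\wt\Delta$-harmonic extension of Proposition~\ref{harm-ext}(i), so that $\wt\Delta^k\wt f=O(\br^{m-k})$ for $k\ge1$. An order count using $\wt\Delta\colon O(\br^\ell)\to O(\br^{\ell-1})$ and $E\colon O(\br^0)\to O(\br^{n/2})$ then annihilates on $M$ every term with $k\ge1$ and, after expanding $(\wt\Delta')^{m-1}=(\wt\Delta+E)^{m-1}$, every term carrying a second factor of $E$, leaving the single surviving contribution $\wt\Delta^{m-1}(E\wt f)\big|_M$.

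The decisive point is a resonance at the critical weight. Writing $E\wt f=\br^{n/2}\Psi$ with $\Psi\in\calE(-n)$ and $\Psi|_M=m(m-1)\,w(w-1)\,\phi f$, the commutation relation \eqref{Lap-comm} shows that $[\wt\Delta,\br^{n/2}]$ vanishes on weight $-n$, so the first of the $n/2+1$ Laplacians cannot lower the order of vanishing; this forces $\wt\Delta^{m-1}(\br^{n/2}\Psi)\big|_M=K\,(\wt\Delta\Psi)\big|_M$ for a nonzero constant $K$, i.e. the whole difference is a fixed multiple of the $\br^{n/2}$-coefficient of $\wt\Delta(E\wt f)=\br^{n/2}\wt\Delta\Psi$. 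Expanding $\wt\Delta\Psi$ by the product rule, the two diagonal terms $\phi\,\wt\Delta\wt f$ and $\wt f\,\wt\Delta\phi$ vanish on $M$ — the first by harmonicity of $\wt f$, the second precisely because strictness gives $\wt\Delta\phi=O(\br)$.

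I expect the main obstacle to be the remaining cross term $-2\,\wt g^{IJ}(D_I\phi)(D_J\wt f)$ on $M$ (with $\wt g^{IJ}|_M$ as in \eqref{g-M}), together with the subleading parts of $E$ discarded above, which re-enter through the two normal derivatives implicit in $(\wt\Delta\Psi)|_M$. These do not vanish termwise; the claim is that, after invoking the $\S2$ tensor identities and integration by parts (self-adjointness of $\wt\Delta$), they reassemble into a further multiple of $\wt\Delta\phi|_M$, so that the entire difference is proportional to $\wt\Delta\phi|_M$ and hence vanishes by strictness. Making this cancellation explicit is the technical heart of the argument. I would also treat the borderline dimension $n=4$ separately, since there a contribution quadratic in $\phi$ sits exactly at the critical order and must be checked by hand.
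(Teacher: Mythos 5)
Your setup is correct as far as it goes, and it parallels the ingredients of the paper's own proof: Lemma \ref{strict}, the estimate $\calO-\calO'=O(\br^2)$ killing the Christoffel-trace difference (the paper's \eqref{trGamma-diff}), and the expansion of $\wt g'^{IJ}$, whose leading term is exactly your $-m(m-1)\phi T^IT^J\br^{n/2}$ (the paper's \eqref{ambient-diff}). Your order counting is also right: in the telescoped sum all terms with $k\ge1$, and all terms with two factors of $E$, vanish on $M$, and the resonance argument $\wt\Delta^{m-1}(\br^{n/2}\Psi)|_M=K(\wt\Delta\Psi)|_M$ with $K=\prod_{\ell=1}^{n/2}\ell(n+4-2\ell)\neq0$ follows from \eqref{Lap-comm}. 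The genuine gap is that you stop precisely at the decisive step --- the disposal of the cross term $\wt g^{IJ}D_I\phi D_J\wt f|_M$ --- and the mechanism you propose for it is wrong. There is no integration by parts, no appeal to self-adjointness, and no ``reassembly into a multiple of $\wt\Delta\phi|_M$''; indeed, an integral identity cannot by itself give the pointwise identity $P'_{n/2+2}f=P_{n/2+2}f$, and self-adjointness of $P_{n/2+2}$ is established in the paper only \emph{after} (and using) well-definedness, so invoking it here would be circular.

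What actually happens is an exact pointwise cancellation between the two pieces you kept separate. From the leading part of $E$ one gets $K\wt\Delta\Psi|_M$ with $\Psi=2m(m-1)\phi\wt f$, and since $\phi\wt\Delta\wt f|_M=0$ (harmonicity) and $\wt f\wt\Delta\phi|_M=0$ (strictness), this equals $-4Km(m-1)\,\wt g^{IJ}D_I\phi D_J\wt f|_M=-K(n+2)(n+4)\,D^I\phi D_I\wt f|_M$. From the $O(\br^{n/2+1})$ part of $E$ (the second line of \eqref{ambient-diff}) one gets $E_{\rm sub}\wt f=(n+4)D^J\phi D_J\wt f\,\br^{n/2+1}+O(\br^{n/2+2})$, using $T^ID_ID_J\wt f=D_J\wt f$ and $\phi\wt g^{IJ}D_ID_J\wt f=O(\br^{n/2})$; applying $\wt\Delta^{m-1}$ and \eqref{Lap-comm} (first commutator constant $n+2$, then the same $K$) gives $+K(n+2)(n+4)\,D^J\phi D_J\wt f|_M$. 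The sum is zero, with no residual term at all, so your ``technical heart'' is a finite computation you never perform --- and had you performed it along your proposed route (tensor identities plus integration by parts) you would not have found the cancellation, since it is between the leading and subleading parts of $E$, not within either one. The paper sidesteps this bookkeeping with a cleaner device that you are free to use because $P_m$ is extension-independent: when computing with $\wt\Delta'$, replace the extension by $\wt f'=\wt f+(n/2+2)\phi\wt f\br^{n/2+1}$; then \eqref{Lap-comm} cancels both the $\phi\wt f\br^{n/2}$ term and the cross term in one stroke, yielding $\wt\Delta'\wt f'=\psi\br'^{n/2+1}+O(\br'^{n/2+2})$ and hence $P'_{n/2+2}f=P_{n/2+2}f$ by Proposition \ref{harm-ext}. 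Finally, your caution about $n=4$ is unnecessary: terms quadratic in $\phi$ enter $\wt g'^{IJ}$ only at order $\br^{n+1}$ (because $\wt g^{KL}D_K\br D_L\br=2\br$), so they never survive restriction to $M$.
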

\begin{proof}
Let $\br$ and $\br^\prime$ be strict Fefferman defining densities. By Lemma \ref{strict}, we can write as
$\br^\prime=\br+\phi\br^{n/2+2}$ with $\phi\in\calE(-n-2)$ satisfying $\wt\Delta\phi=O(\br)$. 
The corresponding obstruction densities satisfy $\mathcal{O}|_M=\mathcal{O}^\prime|_M$ and 
$\wt\Delta(\mathcal{O}-\mathcal{O}^\prime)=O(\br)$, so it holds that $\mathcal{O}-\mathcal{O}^\prime=O(\br^2)$. Thus, from \eqref{trGamma} we have
\begin{equation}\label{trGamma-diff}
\wt g^{\prime IJ}\wt\Gamma^\prime_{IJ}{}^K-\wt g^{IJ}\wt\Gamma_{IJ}{}^K=O(\br^{n/2+2}).
\end{equation}
The associated ambient metrics satisfy  
\begin{equation}\label{ambient-diff}
\begin{aligned}
\wt g^{\prime IJ}&=\wt g^{IJ}-\Bigl(\frac{n}{2}+2\Bigr)\Bigl(\frac{n}{2}+1\Bigr)\phi T^IT^J\br^{n/2} \\
&\quad-\Bigl(\frac{n}{2}+2\Bigr)(T^ID^J\phi + T^JD^I\phi+\phi\wt g^{IJ})\br^{n/2+1} 
+O(\br^{n/2+2}).
\end{aligned}
\end{equation}
Let $f\in\calE[2]$. By Proposition \ref{harm-ext} (i), there is an extension $\wt f$ of $f$ such that 
$\wt\Delta\wt f=\psi\br^{n/2+1}$ and $c_{n/2+2}\psi|_M=P_{n/2+2}f$. Then by \eqref{trGamma-diff} 
and \eqref{ambient-diff}, we have
\begin{align*}
\wt\Delta^\prime \wt f
&= \wt\Delta\wt f+\Bigl(\frac{n}{2}+2\Bigr)\Bigl(\frac{n}{2}+1\Bigr)
\phi T^IT^J D_ID_J\wt f\br^{n/2} \\
&\quad +\Bigl(\frac{n}{2}+2\Bigr)(2T^ID^J\phi D_ID_J\wt f 
-\phi\wt\Delta\wt f)\br^{n/2+1}+O(\br^{n/2+2}) \\
&=\psi\br^{n/2+1}+\Bigl(\frac{n}{2}+2\Bigr)(n+2)\phi\wt f\br^{n/2}+(n+4)D^I\phi D_I\wt f\br^{n/2+1} \\
&\quad+O(\br^{n/2+2}).
\end{align*}
Thus, setting $\wt f^\prime=\wt f+(n/2+2)\phi\wt f\br^{n/2+1}$, we obtain
\begin{align*}
\wt\Delta^\prime \wt f^\prime&=\wt\Delta^\prime \wt f-\Bigl(\frac{n}{2}+2\Bigr)(n+2)\phi\wt f\br^{n/2}
+\Bigl(\frac{n}{2}+2\Bigr)\wt\Delta(\phi\wt f)\br^{n/2+1} \\
&=\psi{\br^{\prime}}^{n/2+1}+O({\br^{\prime}}^{n/2+2}),
\end{align*}
which implies $P^{\prime}_{n/2+2}f=P_{n/2+2}f$.
\end{proof}

It is proved in \cite{G1} and \cite{GH} that on a general even dimensional conformal manifold there is no conformally invariant linear differential operator whose principal part agrees with that of $\Delta_h^{m}$ for $m\ge n/2+1$. In our case, an additional structure on $M$, namely the Fubini--Pick form with the 
Gauss--Codazzi equations, enables us to define an over critical GJMS operator.
\ \\

As an example, we will compute a formula for $P_1:\calE[1-n/2]\longrightarrow\calE[-1-n/2]$. We fix a projective scale $\tau\in\calE(1)$, and take an adapted frame $\{e_\infty=\xi, e_\a\}$. 
For an arbitrary extension $\wt f\in\calE(1-n/2)$ of $f\in\calE[1-n/2]$, we have
$$
D_ID_J\wt f=
\begin{pmatrix}
n/2(n/2-1)\wt f & -n/2\nabla_j\wt f \\
-n/2\nabla_i\wt f & \nabla_i\nabla_j\wt  f+(1-n/2)P_{ij}\wt f
\end{pmatrix}.
$$
Thus,  using \eqref{conn-adapted}, \eqref{g-M}, \eqref{Pab-M} and \eqref{r-M}, we obtain
\begin{align*}
P_1 f&=-\wt g^{IJ}D_ID_J \wt f|_M \\
&=-\bh^{\a\b}\Bigl(\nabla_\a\nabla_\b\wt f+\Bigl(1-\frac{n}{2}\Bigr)P_{\a\b}\wt f\,\Bigr)+n\nabla_\infty\wt f
+\frac{n}{2}\Bigl(\frac{n}{2}-1\Bigr) \boldsymbol{r}\wt f \\
&=\Delta_h f+\frac{n-2}{4(n-1)}(\scal_h-|A|^2)f.
\end{align*}

\subsection{Self-adjointness}
We will prove that the GJMS operators are self-adjoint. 
First we relate the ambient Laplacian $\wt\Delta$ to the Laplacian $\Delta_g$ of the Blaschke metric.
\begin{lem}\label{Lap-relate-lem}
{\rm (i)} Let $u\in C^\infty(\Omega)$ and $w\in\R$. We assume $du=O(\br^{-1})$ in local coordinates around each boundary point. If $w\neq 0$, we further assume that $u=O(1)$. Then when $n$ is even we have
\begin{equation}\label{Lap-relate}
2(-\br)^{1-w/2}\wt\Delta\bigl((-\br)^{w/2}u\bigr)
=\bigl(\Delta_g+w(n+w)\bigr)u+O(\br^{n/2+1}).
\end{equation}
When $n$ is odd, the error term is replaced by $O(\br^\infty)$. \\
{\rm (ii)} Let $\wt f\in\calE(2)$. Then when $n$ is even we have
\begin{equation}\label{Lap-relate2}
2\wt\Delta\wt f=(\Delta_g+2(n+2))\bigl((-\br)^{-1}\wt f\,\bigr)+O(\br^{n/2+2}).
\end{equation}
\end{lem}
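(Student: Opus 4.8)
The plan is to prove both parts at once by putting the ambient metric into an exact warped-product normal form over the Blaschke metric, after which the two identities reduce to the standard Laplacian formula for a warped product. Fix a scale $\tau$ and identify densities with functions, writing $\br$ also for $\tau^{-2}\br$. I would first record three structural facts about $\wt g=D_ID_J\br$, all following from the flatness of $D$ (local flatness of $[\nabla]$) and the degree-two homogeneity of $\br$, with no appeal to the Monge--Amp\`ere equation. Since $D_IT^J=\d_I{}^J$, $T^ID_I\br=2\br$ and $D_ID_J=D_JD_I$, one computes $\wt g_{IJ}T^J=T^JD_ID_J\br=D_I(2\br)-\d_I{}^JD_J\br=D_I\br$, whence $\wt g_{IJ}T^IT^J=2\br$; thus the Euler field $T^I$ is the $\wt g$-gradient of $\br$, it is orthogonal to the level sets $\{\br=c\}$, and it is timelike with $|T|^2_{\wt g}=2\br$. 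Next, the dilation $\d_s$ acts as a homothety, $\d_s^\ast\wt g=s^2\wt g$, because $\wt g_{IJ}$ is the flat Hessian of the degree-two $\br$. Finally, writing out $\wt g_{IJ}$ from the formula for $D_ID_J$ on $\calE(2)$, the Schur complement of $\wt g$ with respect to the Euler direction is exactly $-2\br\,g_{ij}$, where $g_{ij}$ is the Blaschke metric; equivalently, the induced metric on each level set, pushed to $\Omega$, equals the Blaschke metric up to the scalar $-2\br$.

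Combining these, I introduce $t=\sqrt{-\br}$, so that $T=t\,\pa_t$ (the unique vertical field with $Tt=t$), $|\pa_t|^2_{\wt g}=-2$, and the slices $\{t=\mathrm{const}\}=\{\br=\mathrm{const}\}$ are $\wt g$-orthogonal to $\pa_t$. Hence in the coordinates $(t,x)$ on $\wt N|_\Omega$ the ambient metric takes the form $\wt g=-2\,dt^2+2t^2 g$, with $g$ the Blaschke metric and, crucially, $t$-independent by the homothety. Both identities are then immediate from the warped-product Laplacian formula: with $\sqrt{|\wt g|}\propto t^{n+1}\sqrt{|g|}$, $\wt g^{tt}=-\tfrac12$ and $\wt g^{ij}=(2t^2)^{-1}g^{ij}$,
\begin{equation*}
\wt\Delta\Phi=\tfrac12\bigl(\pa_t^2+\tfrac{n+1}{t}\pa_t\bigr)\Phi+\tfrac{1}{2t^2}\Delta_g\Phi .
\end{equation*}
Applying this to $\Phi=(-\br)^{w/2}u=t^w u$ with $u\in C^\infty(\Omega)$ (so $\pa_t u=0$) gives $\wt\Delta\Phi=\tfrac12 t^{w-2}\bigl(w(n+w)u+\Delta_g u\bigr)$, which is exactly \eqref{Lap-relate} after multiplying by $2(-\br)^{1-w/2}=2t^{2-w}$; here $w(n+w)=w\bigl((w-1)+(n+1)\bigr)$ combines the $w(w-1)$ from $\pa_t^2 t^w$ with the $w(n+1)$ from the warping term, which is why the boundary dimension $n$ rather than $\dim\Omega=n+1$ appears. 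Part (ii) is the special case $w=2$ with $u=(-\br)^{-1}\wt f$, since then $\Phi=t^2u=\wt f$.

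The one genuinely delicate point is justifying the normal form, not the subsequent calculus: one must check that $t=\sqrt{-\br}$ is a smooth coordinate transverse to the slices (guaranteed by $T\br=2\br\neq0$ on $\Omega$), match the tractor frame indices with the $(t,x)$ coordinate frame so that the Schur complement really computes the induced slice metric, and use the homothety to see the slice metric is $t$-independent. I expect this to reveal that both relations in fact hold as \emph{exact} identities, for any defining density of degree two and with no hypotheses on $u$; the remainders $O(\br^{n/2+1})$, $O(\br^{n/2+2})$ (and $O(\br^\infty)$ for $n$ odd), together with the assumptions $du=O(\br^{-1})$ and $u=O(1)$, are then automatically met. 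These remainders would instead be the natural output of the alternative, purely asymptotic route that expands $\wt\Delta=-\wt g^{IJ}D_ID_J+\wt g^{IJ}\wt\Gamma_{IJ}{}^KD_K$ using \eqref{g-M} and \eqref{trGamma}: there the Monge--Amp\`ere error $\det\wt g=-1+\calO\br^{n/2+1}$ enters through the connection trace at the obstruction order $\br^{n/2}$, contracting with $T^ID_I\Phi=w\Phi$ and, after the prefactor $(-\br)^{1-w/2}$, producing precisely the stated error, with the $w=2$, smooth-density case of (ii) gaining one further order.
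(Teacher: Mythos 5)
Your argument is correct, and it is genuinely different from the paper's, although both proofs start from the same observation: the paper also passes to the singular scale $\tau'=(-\br)^{1/2}$, in which $\wt g_{IJ}={\rm diag}(2\br,\,-2\br g_{ij})$ --- this is precisely your Schur-complement statement, and the frame-matching point you flag as delicate does work out here, since the tractor splitting in the scale $\tau'$ corresponds exactly to the frame $\{T=t\pa_t,\ \pa_{x^i}\}$ of the associated trivialization of $\wt N$ (as the transformation law of tractor components under rescaling confirms). The difference is in what follows. The paper stays in tractor components, expands $\wt\Delta=-\wt g^{IJ}D_ID_J+\wt g^{IJ}\wt\Gamma_{IJ}{}^KD_K$, and estimates two corrections separately by the Monge--Amp\`ere equation: the Christoffel trace via \eqref{trGamma}, and the discrepancy between $-g^{ij}\nabla'_i\nabla'_j$ and $\Delta_g$ via the difference tensor $B_{ij}{}^k=\nabla^g-\nabla'$, whose trace $g^{ij}B_{ij}{}^k=g^{kl}vol_g^{-1}\nabla'_l vol_g$ is $O(\br^{n/2+2})$ by the volume form expansion; this is exactly where the hypotheses on $u$ and the stated remainders come from. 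Your route instead exhibits $\wt g$ exactly as the Lorentzian cone $-2\,dt^2+2t^2g$ over the Blaschke metric, and your expectation of exactness is correct: the cone form requires no Monge--Amp\`ere input (it holds for any degree-two defining density wherever its Hessian is nondegenerate), so \eqref{Lap-relate} and \eqref{Lap-relate2} hold with zero error and with no hypotheses on $u$. Indeed the two proofs reconcile term by term: writing $\phi=\tfrac12\log(-\calJ[\br])$, the Christoffel term contributes $+\,g^{kl}\nabla'_l\phi\,\pa_k u$ after your prefactor, while $-g^{ij}\nabla'_i\nabla'_j u=\Delta_g u-g^{kl}\nabla'_l\phi\,\pa_k u$, and these cancel identically; the paper bounds each of the two terms instead of noticing the cancellation. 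What your approach buys is a sharper and cleaner statement (and it also explains why the commutation relations \eqref{Lap-comm} are exact identities); what the paper's asymptotic route buys is uniformity with the conformal and CR arguments it is modeled on, where the ambient metric is only approximately a cone over the relevant interior metric and the error terms are genuine --- in the present Hessian setting they are not.
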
 

\begin{proof}
(i) We consider the case of even $n$. We set $\tau^\prime:=(-\br)^{1/2}$. Then $\tau^\prime$ defines a projective scale inside $\Omega$.
We denote by $\nabla^\prime$ the associated connection, which is singular along the boundary. 
Since $\nabla^\prime\br=0$, the Blaschke metric is given by $g_{ij}=-P^\prime_{ij}$, where $P^\prime_{ij}$ is the projective Schouten tensor of $\nabla^\prime$. We set $\bu=(-\br)^{w/2}u$.
Then in the scale $\tau^\prime$ we have
$$
\wt g_{IJ}=\begin{pmatrix}
2\br & 0 \\
0 & -2\br g_{ij}
\end{pmatrix},
\quad D_ID_J\bu=
\begin{pmatrix}
w(w-1)\bu & (w-1)\nabla_j^\prime\bu\\
(w-1)\nabla_i^\prime\bu & \nabla_i^\prime\nabla_j^\prime\bu-wg_{ij}\bu
\end{pmatrix}.
$$
Thus, using \eqref{trGamma} and the assumption on $u$, we compute as
$$
2(-\br)^{1-w/2}\wt\Delta\bu
=-g^{ij}\nabla_i^\prime\nabla_j^\prime u+w(n+w)u+O(\br^{n/2+1}). 
$$
We shall compare $-g^{ij}\nabla_i^\prime\nabla_j^\prime u$ with $\Delta_g u$. Let 
$B_{ij}{}^k\in\calE_{(ij)}{}^k$ be the difference tensor defined by $\nabla^g-\nabla^\prime$. 
The flatness of $[\nabla]$ implies $\nabla^\prime_{[i}g_{j]k}=-(1/2(n-1))\nabla^\prime_l C_{ij}{}^l{}_k=0$, so
it holds that $B_{ijk}=1/2\nabla^\prime_ig_{jk}\in\calE_{(ijk)}$, where the index is lowered by $g_{ij}$. 
Hence we have 
$$
g^{ij}B_{ij}{}^k=g^{kl}B_{li}{}^i=g^{kl}vol_g^{-1}\nabla^\prime_l vol_g.
$$
The Monge--Amp\`ere equation gives
$$
vol_g=(-2\br)^{-(n/2+1)}\Bigl(1-\frac{1}{2}\mathcal{O}\br^{n/2+1}+O(\br^{n/2+3})\Bigr).
$$
We take a projective scale $\tau$ on $N$ and set $\r=\tau^{-2}\br$. Then the associated connection
$\nabla$ satisfies $\nabla=\nabla^\prime+d\r/2\r$, so we have
\begin{align*}
g^{kl}vol_g^{-1}\nabla_l^\prime vol_g&=g^{kl}vol_g^{-1}\nabla_l vol_g
+\Bigl(\frac{n}{2}+1\Bigr)g^{kl}(\r^{-1}\nabla_l\r) vol_g \\
&=g^{kl}\nabla_l\log \Bigl(1-\frac{1}{2}\mathcal{O}\br^{n/2+1}+O(\br^{n/2+3})\Bigr).
\end{align*}
By the definition of the Blaschke metric, we have $g^{kl}=O(\br)$. We also have $g^{kl}\nabla_l\r=O(\br^2)$ as follows:  In the projective scale $\tau^\prime$, the top slot of $\wt g^{KL}D_L\r$ equals 
$(-2\br)^{-1}g^{kl}\nabla_l \r$. On the other hand, $\wt g^{KL}D_L\r=\tau^{-2}T^K+O(\br)$. Thus 
we have $g^{kl}\nabla_l\r=O(\br^2)$. It follows from these estimates that 
\begin{equation}\label{trB}
\begin{aligned}
g^{ij}B_{ij}{}^k&=-\frac{1}{2}g^{kl}\nabla_l(\mathcal{O}\br^{n/2+1})+O(\br^{n/2+4}) \\
&=O(\br^{n/2+2}).
\end{aligned}
\end{equation}
Thus, by the assumption that $du=O(\br^{-1})$, we obtain
$-g^{ij}\nabla_i^\prime\nabla_j^\prime u=-g^{ij}\nabla_i^g\nabla_j^g u-g^{ij}B_{ij}{}^k\nabla_k u
=\Delta_g u+O(\br^{n/2+1})$, which yields \eqref{Lap-relate}. 

(ii) As in the proof of (i), we compute as
\begin{equation}\label{Delta-f}
\begin{aligned}
\wt\Delta\wt f&=-\wt g^{IJ}D_ID_J\wt f+\wt g^{IJ}\wt\Gamma_{IJ}{}^KD_K\wt f \\
&=(n+2)(-\br)^{-1}\wt f-\frac{1}{2}g^{ij}\nabla^\prime_i\nabla^\prime_j\bigl((-\br)^{-1}\wt f\,\bigr)
-\Bigl(\frac{n}{2}+1\Bigr)\mathcal{O}\wt f\br^{n/2} \\
&\quad -\frac{1}{2}\wt g^{IJ}D_I\mathcal{O}D_J\wt f \br^{n/2+1}+O(\br^{n/2+2}).
\end{aligned}
\end{equation}
By the first equality in \eqref{trB}, we have
\begin{align*}
-g^{ij}\nabla^\prime_i\nabla^\prime_j\bigl((-\br)^{-1}\wt f\,\bigr)=\Delta_g\bigl((-\br)^{-1}\wt f\,\bigr)
-\frac{1}{2}g^{ij}\nabla^\prime_i\mathcal{O}\nabla^\prime_j\wt f \br^{n/2}+O(\br^{n/2+2}).
\end{align*}
Also, calculating in the projective scale $\tau^\prime$, we have 
$$
-\wt g^{IJ}D_I\mathcal{O}D_J\wt f \br^{n/2+1}=(n+2)\mathcal{O}\wt f\br^{n/2}
+\frac{1}{2}g^{ij}\nabla^\prime_i\mathcal{O}\nabla^\prime_j\wt f \br^{n/2}.
$$
Substituting these equations to \eqref{Delta-f}, we obtain \eqref{Lap-relate2}.
\end{proof}

\begin{prop}
{\rm (i)} The GJMS operator $P_m$ is self-adjoint for $1\le m\le n/2$ when $n$ is even and for all $m$ when $n$ is odd. \\
{\rm (ii)} Let $n$ be even and let $\br$ be a strict Fefferman defining density. Then $P_{n/2+2}$ is self-adjoint.
\end{prop}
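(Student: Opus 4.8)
The plan is to prove self-adjointness by reducing it to a Green's identity for the Blaschke Laplacian $\Delta_g$ via Lemma \ref{Lap-relate-lem}, following the Poincar\'e--Einstein scattering argument of Graham--Zworski \cite{GZ} and its CR analogue \cite{HMM}. Self-adjointness of $P_m:\calE[m-n/2]\to\calE[-m-n/2]$ means
\begin{equation*}
\int_M f_1\,P_m f_2=\int_M f_2\,P_m f_1,\qquad f_1,f_2\in\calE[m-n/2],
\end{equation*}
which makes sense because each integrand lies in $\calE[-n]\cong\wedge^nT^\ast M$. I would fix a projective scale $\tau$, set $\r=\tau^{-2}\br$, and invoke Proposition \ref{harm-ext}(ii) to produce formal solutions $U_i=A_i+B_i\br^m\log|\r|$ with $\wt\Delta U_i=O(\br^\infty)$, $A_i|_M=f_i$, and $c_m^\prime B_i|_M=P_m f_i$.

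Writing these densities in the interior scale $\tau^\prime=(-\br)^{1/2}$ turns them into functions $u_i$ on $\Omega$, and Lemma \ref{Lap-relate-lem}(i), applied with $w=m-n/2$, converts $\wt\Delta U_i=O(\br^\infty)$ into $(\Delta_g+w(n+w))u_i=O(\br^\infty)$. Thus both $u_i$ are approximate eigenfunctions of the asymptotically hyperbolic Blaschke metric $g$ with the same eigenvalue $w(n+w)=n^2/4-m^2$; equivalently $\Delta_g u_i=s(n-s)u_i$ with $s=n/2+m$, and the two indicial exponents differ by $2m$, so the expansion of $u_i$ carries a $\log$-term at the subleading (``scattering'') order whose coefficient is a fixed multiple of $P_m f_i$.

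The heart of the argument is Green's identity for $\Delta_g$ on the truncated region $\Omega_\e=\{\tau^{-2}\br<-\tfrac12\e^2\}$:
\begin{equation*}
\int_{\Omega_\e}\bigl(u_1\Delta_g u_2-u_2\Delta_g u_1\bigr)\,dV_g
=\int_{\pa\Omega_\e}\bigl(u_2\,\pa_\nu u_1-u_1\,\pa_\nu u_2\bigr)\,dA_g.
\end{equation*}
Since both $u_i$ solve the same eigenvalue equation to infinite order, the interior integrand is $O(\br^\infty)$ and the left-hand side tends to $0$ as $\e\to0$. Inserting the indicial expansions into the boundary integral, the divergent powers of $\e$ are symmetric in $(u_1,u_2)$ and cancel in the antisymmetric combination, while the only surviving finite contribution pairs the leading coefficient $f_i$ against the $\log$-coefficient $P_m f_j$; this forces $\int_M(f_1P_mf_2-f_2P_mf_1)=0$. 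For $n$ odd the error term in Lemma \ref{Lap-relate-lem}(i) is $O(\br^\infty)$ and the conclusion is immediate for all $m$; for $n$ even, $P_m f$ is read off at order $\br^{m-1}$ (Proposition \ref{harm-ext}(i)), and since $m\le n/2$ gives $m-1<n/2+1$, the error $O(\br^{n/2+1})$ is of strictly higher order and does not affect the boundary pairing. This proves (i).

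For part (ii) the operator $P_{n/2+2}$ is over-critical, and the data order $\br^{m-1}=\br^{n/2+1}$ now coincides exactly with the error order $O(\br^{n/2+1})$ of Lemma \ref{Lap-relate-lem}(i), so the obstruction-density contributions can no longer be discarded. This is precisely where the strict Fefferman defining density enters: the normalization $\wt\Delta\calO=O(\br)$ of Lemma \ref{strict} controls the $\calO$-dependent corrections to $vol_g$ and to $g^{ij}B_{ij}{}^k$ appearing in \eqref{trB}, and Lemma \ref{Lap-relate-lem}(ii), stated for weight-$2$ densities, is tailored to this case. I would rerun the Green's identity keeping the borderline $\br^{n/2+1}$ terms explicitly and show, using $\wt\Delta\calO=O(\br)$, that their contribution to $\int_{\pa\Omega_\e}$ is symmetric in $(u_1,u_2)$ and therefore drops from the antisymmetric part. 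The main obstacle is exactly this bookkeeping at the critical order: one must verify that strictness is strong enough to kill the antisymmetric part of the boundary term, rather than merely the generic higher orders handled in (i).
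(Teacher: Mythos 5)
Your overall frame---transfer to the Blaschke metric by Lemma \ref{Lap-relate-lem} and integrate by parts over $\{\r<-\e\}$---is indeed the paper's, but the mechanism by which you extract self-adjointness has a genuine gap. You assert that, because the $u_i$ ``solve the same eigenvalue equation to infinite order,'' the interior term $\int_{\Omega_\e}(u_1\Delta_g u_2-u_2\Delta_g u_1)\,vol_g$ tends to $0$. This is false in the present setting: Proposition \ref{harm-ext}(ii) produces only \emph{formal asymptotic} solutions, i.e.\ the equation $\wt\Delta(A_i+B_i\br^m\log|\r|)=O(\br^\infty)$ holds to high order \emph{at the boundary}, while in the interior of $\Omega$ the extensions $A_i, B_i$ are arbitrary; hence $u_1\Delta_gu_2-u_2\Delta_gu_1$ is a nonzero function inside $\Omega$ and the left-hand side of your Green identity converges to an unknown constant depending on those arbitrary extensions, not to $0$ (this already fails for $n$ odd; for $n$ even there is the additional $O(\br^{n/2+1})$ error from Lemma \ref{Lap-relate-lem}(i)). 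Exact eigenfunctions, as in the Graham--Zworski scattering argument you cite, would require solving the equation globally on $(\Omega,g)$ together with spectral hypotheses---machinery that is neither developed nor available in this formal construction. Correspondingly, the \emph{finite part} of your boundary integral is not just the pairing of $f_i$ with $P_mf_j$: the antisymmetric combination also pairs $f_i$ against the unknown non-logarithmic coefficient at the second indicial root $\e^{(n/2+m)/2}$ (the ``Neumann data'' of the arbitrary interior extension, undetermined by Proposition \ref{harm-ext}(ii)), with nonzero coefficient $2m$, and it is exactly this term that balances the unknown interior limit. So the finite-part bookkeeping cannot close.

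The paper's proof avoids all of this by extracting the coefficient of $\log(1/\e)$ (denoted ${\rm lp}$) rather than the finite part, and by using the manifestly symmetric energy $\int_{\{\r<-\e\}}\bigl(\langle du_1,du_2\rangle_g+(m-n/2)(m+n/2)u_1u_2\bigr)vol_g$: after one integration by parts, the interior term has integrand $O(1)$ (integrable), hence contributes nothing to ${\rm lp}$ \emph{whatever its actual value}, while the ${\rm lp}$ of the boundary term is computed from the expansions to be a multiple of $\int_M\bigl((n/2+m)f_1P_mf_2+(n/2-m)f_2P_mf_1\bigr)$; symmetry in $(u_1,u_2)$ then forces $\int_M(f_1P_mf_2-f_2P_mf_1)=0$. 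Your antisymmetric identity could be repaired the same way (the interior integral converges, so the boundary term can carry no $\log(1/\e)$ divergence, and its log coefficient is $2m$ times the antisymmetric pairing), but the decisive step---log-coefficient extraction---is absent from your argument. For (ii) your plan also misplaces where strictness and the critical order enter: the paper does not fight a borderline boundary term; it takes the extensions $\wt f_j$ of Proposition \ref{harm-ext}(i), sets $u_j=(-\br)^{-1}\wt f_j$, applies Lemma \ref{Lap-relate-lem}(ii), and the pairing $\int_M f_1P_{n/2+2}f_2$ then arises as the ${\rm lp}$ of the \emph{interior} integral, since the integrand $-2\wt f_1\psi_2\br^{n/2}\,vol_g\sim\br^{-1}$ is exactly the borderline non-integrable order, the divergence term contributing nothing; strictness of $\br$ is needed so that $P_{n/2+2}$ is a well-defined canonical operator at all, not to cancel antisymmetric boundary contributions.
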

\begin{proof}
{\rm (i)} We fix a projective scale $\tau\in\calE(1)$ and take an adapted frame $\{e_\infty=\xi, e_\a\}$ with the dual
$\{d\r, \th^\a\}$.
We set 
\begin{align*}
\wt h_{\a\b}&=h_{\a\b}+2\r P_{\a\b}, \quad \wt\th^\a=\th^\a+2\r\wt h^{\a\g}P_{\g\infty}d\r, \\
\wt r&=r+2\r P_{\infty\infty}-4\r^2\wt h^{\g\mu}P_{\g\infty}P_{\mu\infty},
\end{align*}
where $\wt h^{\a\b}$ is the inverse of $\wt h_{\a\b}$.
Then the Blaschke metric is represented as
$$
g=\frac{\wt h_{\a\b}}{-2\r}\,\wt\th^\a\cdot\wt\th^\b+\frac{1-2\wt r\r}{4\r^2}\,d\r^2.
$$
Thus the outward unit normal filed on each level set of $\r$ is given by
\begin{equation}\label{normal}
\begin{aligned}
\nu&=\frac{-2\r}{(1-2\wt r\r)^{1/2}}(\xi-2\r\wt h^{\a\g}P_{\g\infty}e_\a) \\
&\equiv-2\r(1+O(\r))\xi \quad {\rm mod}\ {\rm Ker}\,d\r.
\end{aligned}
\end{equation}
We identify a neighborhood of $M$ in $\overline\Omega$ with the product $M\times(-\e_0, 0]$ via the flow generated by $\xi$ so that the second component is given by $\r$. Since
\begin{equation}\label{vol-g}
vol_g=\frac{1+O(\r)}{(-2\r)^{n/2+1}}d\r\wedge vol_h,
\end{equation}
we have 
\begin{equation}\label{nu-vol}
(\nu\lrcorner\, vol_g)|_{TM_\e}=\frac{1+O(\e)}{(2\e)^{n/2}}vol_h,
\end{equation}  
where $M_\e=\{\r=-\e\}$. 

Let $f_1, f_2\in\calE[m-n/2]$. We take $A_j\in\calE(m-n/2)$ and $B_j\in\calE(-m-n/2)$ for $j=1, 2$ as in Proposition \ref{harm-ext} (ii), and set 
$$
u_j=(-\br)^{1/2(n/2-m)}(A_j+B_j\br^m\log|\r|).
$$
Since $u_j$ satisfies the assumption in Lemma \ref{Lap-relate-lem} (i), we have 
\begin{align*}
\bigl(\Delta_g+(m-n/2)(m+n/2)\bigr)u_j&=
2(-\br)^{1-1/2(m-n/2)}\wt\Delta(A_j+B_j\br^m\log|\r|) \\
&\quad +O(\r^{n/2+1}) \\
&=O(\r^{n/2+1})
\end{align*}
for even $n$. When $n$ is odd, the last term is replaced by $O(\br^\infty)$. We shall compute 
\begin{equation}\label{lp-int1}
{\rm lp}\int_{\{\r<-\e\}}\bigl(\langle du_1,du_2\rangle_g+(m-n/2)(m+n/2)u_1u_2\bigr)vol_g,
\end{equation}
where `${\rm lp}$' stands for the coefficient of $\log (1/\e)$ in the expansion of the integral.
By Green's formula, this is equal to
$$
{\rm lp}\int_{\{\r<-\e\}} u_1\bigl(\Delta_g+(m-n/2)(m+n/2)\bigr)u_2 vol_g
+{\rm lp}\int_{M_\e}(u_1\cdot\nu u_2) \nu\lrcorner \,vol_g.
$$ 
The first term equals 0 since the integrand is $O(1)$ for $1\le m\le n/2$ when $n$ is even and for all $m$ when $n$ is odd. It follows from \eqref{normal} and 
\eqref{nu-vol} that the second term is given by
$$
 2^{-n/2}(-1)^mc^{\prime-1}_m\int_M \bigl((n/2+m)f_1 P_m f_2+(n/2-m)f_2P_m f_1 \bigr).
$$
This integral must be symmetric in $f_1$ and $f_2$ since \eqref{lp-int1} is symmetric in $u_1$ and $u_2$. 
Thus we have 
$$
\int_M (f_1 P_m f_2-f_2 P_m f_1)=0,
$$ 
so $P_m$ is self-adjoint. \\
{\rm (ii)} Let $f_1, f_2\in\calE[2]$ and take 
$\wt f_1, \wt f_2\in\calE(2)$ as in Proposition \ref{harm-ext} (i), and set 
$u_j=(-\br)^{-1}\wt f_j$ ($j=1, 2$). By Lemma \ref{Lap-relate-lem} (ii), we have 
\begin{align*}
\bigl(\Delta_g+2(n+2)\bigr)u_j&=2\wt\Delta\bigl((-\br)u_j\bigr)+O(\br^{n/2+2}) \\
&=2\psi_j\br^{n/2+1}+O(\br^{n/2+2}).
\end{align*}
Hence, 
\begin{align*}
\langle du_1, du_2\rangle+2(n+2)u_1u_2
&=u_1\bigl(\Delta_g u_2+2(n+2)u_2\bigr)+ ({\rm div}) \\
&=-2\wt f_1\psi_2\br^{n/2}+O(\br^{n/2+1})+ ({\rm div}).
\end{align*}
As in the proof of (i), we see the log term of the integration of this function. 
Since the divergence term does not contribute to it, we have 
\begin{align*}
&\quad {\rm lp} \int_{\{\r<-\e\}}\bigl(\langle du_1, du_2\rangle+2(n+2)u_1u_2\bigr)vol_g \\
&={\rm lp}\int_{\{-\e_0<\r<-\e\}}\bigl(-2\wt f_1\psi_2\br^{n/2}+O(\br^{n/2+1})\bigr)\frac{1+O(\r)}{(-2\r)^{n/2+1}}d\r\wedge vol_h \\
&=c\int_M f_1P_{n/2+2}f_2,
\end{align*}
with a nonzero constant $c$. It follows from the symmetry in $u_1$ and $u_2$ that $P_{n/2+2}$ is self-adjoint. 
\end{proof}
\subsection{Conformal Codazzi $Q$-curvature}
Let $n$ be even. We take a conformal scale $\sigma\in\calE[1]$ on $M$ and extend it arbitrarily to a projective scale $\tau\in\calE(1)$ on $N$. We define the {\it (conformal Codazzi) $Q$-curvature} by  
$$
Q=-\wt\Delta^{n/2}\log t \,\big |_M\in\calE[-n],
$$
where $t$ is the fiber coordinate of the $\R_+$-bundle $\wt N$ associated with the trivialization by  $\tau^{-1}$. Note that the definition is independent of the choice of $\tau$; If we take a different extension, 
then $\log t$ changes by a function $f\in\calE(0)$ with $f=O(\br)$, so $Q$ is invariant.

Let $\wh\sigma=e^{-\U}\sigma$ be another conformal scale. We extend $\wh\sigma$ to $\wh\tau=e^{-\wt\U}\tau$ with an extension $\wt\U$ of $\U$. Then the $Q$-curvature changes as
\begin{align*}
\wh Q&=-\wt\Delta^{n/2}(\log t-\wt\U)\,\big|_M \\
&=Q+P_{n/2}\U.
\end{align*}
Since $P_{n/2}$ is self-adjoint and satisfies $P_{n/2}1=0$, the total $Q$-curvature 
$$
\overline{Q}=\int_M Q
$$
is a conformal invariant.

As in the usual conformal case (\cite{FH}), the $Q$-curvature is related to the harmonic extension of 
$\log t$: 
\begin{lem}
Let $\tau\in\calE(1)$ be a projective scale and set $\r=\tau^{-2}\br$, where $\br$ is a Fefferman defining 
density. Then there exist $A\in\calE(0)$ and $B\in\calE(-n)$ such that  $A|_M=0$ and 
\begin{equation}\label{Delta-logt}
\wt\Delta(\log t+A+B\br^{n/2}\log|\r|)=O(\br^{\infty}).
\end{equation}
Such $A$ and $B$ are unique modulo $O(\br^{n/2})$ and $O(\br^{\infty})$ respectively and it holds that $2^{n/2-1}n\{(n/2-1)!\}^2B|_M=Q$.
\end{lem}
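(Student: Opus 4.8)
The plan is to treat $\log t$ as a generalized density of weight $0$ on $\wt N$ and to run the formal solution procedure behind Proposition \ref{harm-ext}~(ii), the one new point being that $\log t$ is not an honest density. Writing $\log t=\tfrac12\log(-\br)-\tfrac12\log(-\r)$ in the scale $\tau$, the identity $\wt\Delta\br=-(n+2)$ together with $|\wt\nabla\br|_{\wt g}^2=\wt g^{IJ}D_I\br\,D_J\br=\wt g_{IJ}T^IT^J=2\br$ (which follows from $\wt g_{IJ}T^J=D_I\br$) yields $\wt\Delta\log(-\br)=n(-\br)^{-1}$, while Lemma \ref{Lap-relate-lem}~(i) with $w=0$ controls $\wt\Delta\log(-\r)$. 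Since $\wt g_{IJ}$ is smooth and nondegenerate up to $M$ by \eqref{g-M} and $t>0$ there, $\wt\Delta\log t$ is in fact a smooth density near $M$, so the iterative construction applies to $\log t$ exactly as to a weight-$0$ density.

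Next I would solve $\wt\Delta(\log t+A)=O(\br^\infty)$ order by order. Seeking $A=\sum_{j\ge1}\phi_j\br^j$ with $\phi_j\in\calE(-2j)$ (so that $A\in\calE(0)$ and $A|_M=0$), the commutation relation \eqref{Lap-comm} gives $\wt\Delta(\phi_k\br^k)\equiv-k(n-2k)\phi_k\br^{k-1}\pmod{\br^k}$. Since $-k(n-2k)\neq0$ for $1\le k\le n/2-1$, I can cancel the $\br^{k-1}$ term of $\wt\Delta(\log t+A)$ uniquely through order $n/2-1$, leaving $\wt\Delta(\log t+A)=\psi\br^{n/2-1}+O(\br^{n/2})$. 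At $k=n/2$ the coefficient $-k(n-2k)$ vanishes; this is the obstruction, and instead of a power correction I introduce $B\br^{n/2}\log|\r|$. Applying \eqref{Lap-comm} with $\wt\Delta$ now hitting the $\log|\r|$ factor shows that $\wt\Delta(B\br^{n/2}\log|\r|)$ has leading term a nonzero multiple of $B$ at order $\br^{n/2-1}$, so $B|_M$ is fixed by cancelling $\psi|_M$; for $k>n/2$ the coefficients $-k(n-2k)$ are again nonzero, so the remaining powers are cancelled to reach $O(\br^\infty)$. Uniqueness of $A$ modulo $O(\br^{n/2})$ and of $B$ modulo $O(\br^\infty)$ follows from the unique solvability at the non-critical orders together with the one-dimensional indeterminacy of $\phi_{n/2}$ at the critical order.

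Finally I would identify the constant. Writing $A=\br A_1$ with $A_1\in\calE(-2)$, the second relation in \eqref{Lap-comm} with $w=-2$ and $m=n/2$ gives $[\wt\Delta^{n/2},\br]A_1=0$, hence $\wt\Delta^{n/2}A|_M=0$. Applying $\wt\Delta^{n/2-1}$ to \eqref{Delta-logt} and restricting to $M$ (noting that $\wt\Delta$ preserves $O(\br^\infty)$) then gives $\wt\Delta^{n/2}\log t|_M+\wt\Delta^{n/2}(B\br^{n/2}\log|\r|)|_M=0$, that is, $\wt\Delta^{n/2}(B\br^{n/2}\log|\r|)|_M=Q$ by the definition $Q=-\wt\Delta^{n/2}\log t|_M$. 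It then remains to prove $\wt\Delta^{n/2}(B\br^{n/2}\log|\r|)|_M=2^{n/2-1}n\{(n/2-1)!\}^2\,B|_M$, which is obtained by iterating the commutation relations on $\br^{n/2}\log|\r|$; this is precisely the bookkeeping that produces the constant $c_m'$ of Proposition \ref{harm-ext}~(ii) at $m=n/2$, and extracting it cleanly is the main obstacle.
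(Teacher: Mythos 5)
Your route is the right one (the paper itself gives no proof of this lemma, deferring to the conformal argument of \cite{FH}), and most of your steps check out: $\wt\Delta\log t$ is indeed a smooth density in $\calE(-2)$ near $M$, the order-by-order cancellation with critical exponent $k=n/2$ is the correct mechanism, and the reduction of the constant claim to $\wt\Delta^{n/2}(B\br^{n/2}\log|\r|)|_M=Q$ via $[\wt\Delta^{n/2},\br]A_1=0$ is a clean and valid observation. The genuine gap is exactly the one you flag: the identity $\wt\Delta^{n/2}(B\br^{n/2}\log|\r|)|_M=2^{n/2-1}n\{(n/2-1)!\}^2B|_M$ is part of the statement and is never proved. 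It is not safe to wave at it, for two reasons. First, it needs an input you glossed over in the construction: a power series plus $\log|\r|$ times a power series is $O(\br^\infty)$ only if each part vanishes to infinite order, so \eqref{Delta-logt} forces $\wt\Delta(B\br^{n/2})=O(\br^\infty)$; this is what determines the Taylor coefficients of $B$ beyond $B|_M$ (your uniqueness claim for $B$ tacitly uses it) and what removes the $\log|\r|$ factor below. Second, your proposed mechanism for the leading term --- ``\eqref{Lap-comm} hitting the $\log$ factor'', i.e.\ differentiating $[\wt\Delta,\br^s]$ in $s$ --- only sees the $\log|\br|$ part of $\log|\r|=\log|\br|-2\log t$ and yields $-nB\br^{n/2-1}$; the $\log t$ part contributes $+2nB\br^{n/2-1}$, and with the wrong sign you would end up proving $Q=-2^{n/2-1}n\{(n/2-1)!\}^2B|_M$.

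Here is how to close it. Since $\wt\nabla_J\log|\r|=\br^{-1}D_J\br-2D_J\log t$ and $\wt\Delta\log|\r|=n(-\br)^{-1}-2\wt\Delta\log t$ with $\wt\Delta\log t$ smooth, the product rule gives
\begin{align*}
\wt\Delta\bigl(B\br^{n/2}\log|\r|\bigr)
&=\log|\r|\,\wt\Delta(B\br^{n/2})-nB\br^{n/2-1}
+4\,\wt g^{IJ}D_I(B\br^{n/2})\,D_J\log t+O(\br^{n/2}) \\
&=\log|\r|\,\wt\Delta(B\br^{n/2})+nB\br^{n/2-1}+O(\br^{n/2}),
\end{align*}
where the term $\br^{-1}\wt g^{IJ}D_I(B\br^{n/2})D_J\br=\br^{-1}T^ID_I(B\br^{n/2})$ drops out because $B\br^{n/2}$ has weight $0$, and
$\wt g^{IJ}D_I(B\br^{n/2})D_J\log t=\tfrac{n}{2}B\br^{n/2-1}\,T^JD_J\log t+O(\br^{n/2})
=\tfrac{n}{2}B\br^{n/2-1}+O(\br^{n/2})$, using $\wt g^{IJ}D_I\br=T^J$ (which you already derived) and $T^JD_J\log t=1$. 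Now invoke the forced normalization $\wt\Delta(B\br^{n/2})=O(\br^\infty)$, apply $\wt\Delta^{n/2-1}$, and iterate \eqref{Lap-comm} with $B\in\calE(-n)$: each step $\br^i\mapsto\br^{i-1}$ contributes the factor $-i\bigl(2(-n)+2i+n\bigr)=i(n-2i)$, while the remainders $\br^i\wt\Delta(\cdot)$ vanish at $M$ after $\wt\Delta^{i-1}$. Hence
$$
Q=\wt\Delta^{n/2}\bigl(B\br^{n/2}\log|\r|\bigr)\big|_M
=n\prod_{i=1}^{n/2-1}i(n-2i)\,B|_M
=2^{n/2-1}n\{(n/2-1)!\}^2\,B|_M,
$$
which is the stated constant. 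With this computation inserted, your argument is complete.
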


With this lemma, we show the following proposition:

\begin{prop}
There exists a nonzero universal constant $a_n$ such that 
$$
L=a_n\overline{Q},
$$
where $L$ is the coefficient of the logarithmic term in the volume expansion of the Blaschke metric $g$.
\end{prop}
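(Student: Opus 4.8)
The plan is to evaluate $L$ and $\overline{Q}$ as integrals over $M$ of one and the same boundary density, and then to match the universal constants. First I would extract $L$ from the volume form. Writing $\r=\tau^{-2}\br$ and using \eqref{vol-g}, near $M$ we have $vol_g=\mathfrak V(\r,x)\,(-2\r)^{-n/2-1}\,d\r\wedge vol_h$ with $\mathfrak V=1+O(\r)$ a local invariant admitting a transverse Taylor expansion $\mathfrak V=\sum_{k\ge0}\mathfrak V_k(x)\r^k$. Integrating term by term in $\r$ over a collar $\{-\r_0<\r<-\e\}$ and isolating the coefficient of $\log(1/\e)$, only the power $\r^{-1}$ survives, so that
\begin{equation*}
L=c_n^{(1)}\int_M\mathfrak V_{n/2}\,vol_h,\qquad c_n^{(1)}\neq0 ;
\end{equation*}
that is, $L$ is a nonzero multiple of the integral of the $\r^{n/2}$-coefficient of the transverse volume density.

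Next I would realize the $\log t$ of the preceding lemma as a function on $\Omega$ and pass to the Blaschke Laplacian. In the Blaschke scale $\tau^\prime=(-\br)^{1/2}$ the relevant section carries $t=(-\r)^{-1/2}$, hence $\log t=-\tfrac12\log(-\r)$, and the harmonic extension of $\log t$ becomes the function
\begin{equation*}
\mathfrak u=-\tfrac12\log(-\r)+A+B\,\r^{n/2}\log|\r|,\qquad A|_M=0,
\end{equation*}
where $B|_M$ is a fixed nonzero multiple of $Q$ by that lemma. Since $d\mathfrak u=O(\r^{-1})$, Lemma \ref{Lap-relate-lem}(i) with $w=0$ converts $\wt\Delta\mathfrak u=O(\br^\infty)$ into $\Delta_g\mathfrak u=O(\r^{n/2+1})$. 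Thus $\mathfrak u$ is the Blaschke-harmonic extension of $-\tfrac12\log(-\r)$, which is exactly the function whose logarithmic coefficient governs the renormalized volume in the Poincar\'e--Einstein and Cheng--Yau prototypes (\cite{FG2}, \cite{HPT}).

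The crux is to identify $\int_M B|_M$ with $\int_M\mathfrak V_{n/2}$ up to a universal nonzero constant. The leading term $-\tfrac12\log(-\r)$ is itself not $\Delta_g$-harmonic: using the divergence form $\Delta_g=-\,vol_g^{-1}\,\partial_i\bigl(vol_g\,g^{ij}\partial_j\,\cdot\,\bigr)$ together with $g^{\r\r}=4\r^2(1+O(\r))$ and the formula for $vol_g$ above, one computes that $\Delta_g\bigl(-\tfrac12\log(-\r)\bigr)$ has a transverse expansion whose $\r^{n/2}$-coefficient is a nonzero multiple of $\mathfrak V_{n/2}$ modulo tangential $h$-divergences. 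The term $B\,\r^{n/2}\log|\r|$ is present precisely to cancel this obstruction; a direct computation of $\Delta_g(\r^{n/2}\log|\r|)$ shows that its $\r^{n/2}\log|\r|$-terms cancel and its $\r^{n/2}$-coefficient is a nonzero multiple of $B|_M$, so matching the $\r^{n/2}$-coefficient in $\Delta_g\mathfrak u=O(\r^{n/2+1})$ forces $B|_M$ to equal the volume coefficient up to an explicit factor. Integrating over $M$ kills the divergences and gives $\int_M B|_M\,vol_h=c_n^{(2)}\int_M\mathfrak V_{n/2}\,vol_h$ with $c_n^{(2)}\neq0$; combined with the first step and $Q=2^{n/2-1}n\{(n/2-1)!\}^2B|_M$ this yields $L=a_n\overline{Q}$ with $a_n\neq0$.

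The hard part is exactly this last identification: one must carry out the $\Delta_g$-to-$\wt\Delta$ conversion and the transverse expansion carefully enough to confirm that the $\r^{n/2}$-coefficient of $\Delta_g\bigl(-\tfrac12\log(-\r)\bigr)$ is the \emph{volume} coefficient $\mathfrak V_{n/2}$ up to a divergence, and to check that the accumulated constant $a_n$ is nonzero. A cleaner alternative that sidesteps the pointwise matching is to integrate a Green identity for $\mathfrak u$ over $\{\r<-\e\}$, as in the proof of self-adjointness, and compare the $\log(1/\e)$-coefficients of the bulk and boundary terms: the boundary flux of $\mathrm{grad}_g\mathfrak u$ is asymptotically the unit normal flux through the level sets, whose logarithmic coefficient is $L$, while the interior contribution produces $\int_M B|_M\propto\overline{Q}$.
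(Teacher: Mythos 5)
Your reduction to the Blaschke metric fails at its central step: the function $\mathfrak{u}=-\tfrac12\log(-\r)+A+B\br^{n/2}\log|\r|$ is \emph{not} Blaschke-harmonic. The function $\log t$ on $\wt N$ is not homogeneous of degree zero, so you may not replace it by its restriction $-\tfrac12\log(-\r)$ to the section cut out by $\tau^\prime=(-\br)^{1/2}$ and then feed the result into Lemma \ref{Lap-relate-lem}\,(i) with $w=0$; that lemma applies only to genuine functions on $\Omega$. As degree-zero functions on $\wt N$ one has $\mathfrak{u}=(\log t+A+B\br^{n/2}\log|\r|)-\tfrac12\log(-\br)$, and since $\wt\Delta\br=-(n+2)$ and $\wt g^{IJ}D_I\br\,D_J\br=2\br$ give $\wt\Delta\log(-\br)=-n\br^{-1}$, the extension equation \eqref{Delta-logt} yields
\begin{equation*}
\wt\Delta\mathfrak{u}=\frac{n}{2\br}+O(\br^\infty),
\qquad\text{hence}\qquad
\Delta_g\mathfrak{u}=-n+O(\br^{n/2+1}),
\end{equation*}
not $O(\br^{n/2+1})$. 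This constant $-n$ is not a nuisance term; it is the engine of the whole proof, and your ``cleaner alternative'' makes the error self-refuting. If $\Delta_g\mathfrak{u}$ were $O(\br^{n/2+1})$, the bulk integrand in Green's formula over $\{\r<-\e\}$ would be $O(\br^{n/2+1})\,vol_g=O(1)$ and would contribute no logarithm, so the identity would force the $\log(1/\e)$-coefficient of the boundary flux to vanish; but that coefficient is (by \eqref{normal} and \eqref{nu-vol}, the only logarithms coming from the $B\br^{n/2}\log|\r|$ term) a nonzero multiple of $\int_M B|_M$, i.e.\ of $\overline{Q}$, so you would have ``proved'' $\overline{Q}\equiv0$. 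With the correct equation the same Green's identity is exactly the paper's proof: the bulk term is $-n\,{\rm Vol}(\{\r<-\e\})+O(1)$, whose log coefficient is $-nL$, while the boundary flux contributes $c\int_M B|_M$ with $c\neq0$. Note this also reverses your assignment of the two sides: $L$ arises from the bulk (volume) term and $\overline{Q}$ from the boundary flux, since a boundary integral over $M_\e$ of a power-series integrand can never produce a logarithm by itself.

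Your primary route has a further gap independent of this. The assertion that the $\r^{n/2}$-coefficient of $\Delta_g\bigl(-\tfrac12\log(-\r)\bigr)$ is a nonzero multiple of $\mathfrak{V}_{n/2}$ modulo tangential $h$-divergences is precisely the hard statement, and it is not a ``direct computation'': expanding in the collar, this coefficient equals $n\mathfrak{V}_{n/2}$ \emph{plus} nonlinear combinations of the lower-order coefficients of $\mathfrak{V}$, of $g^{\r\r}=4\r^2(1+O(\r))$, and (once $\Delta_g A$ is included) of the Taylor coefficients of $A$, none of which is visibly a divergence. Showing that these terms organize themselves into divergences is the analogue of the holographic formula for $Q$-curvature in the conformal case, and is strictly harder than the integrated identity $L=a_n\overline{Q}$ you are trying to prove; the Green's-identity argument exists precisely to bypass it. The repair is to abandon the pointwise matching, establish $\Delta_g\bigl(\tfrac12\log|\r|-A-B\br^{n/2}\log|\r|\bigr)=n+O(\br^{n/2+1})$ as above, and then run Green's formula with the bulk and boundary contributions in their correct roles.
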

\begin{proof}
We fix a projective scale and take $A$ and $B$ as in the previous lemma. It follows from Lemma \ref{Lap-relate-lem} (i) that  
\begin{align*}
\Delta_g\log|\r|&=-2\br\wt\Delta\log|\r|+O(\br^{n/2+1}) \\
&=-2\br\wt\Delta(\log |\br|-2\log t)+O(\br^{n/2+1}) \\
&=2n+4\br\wt\Delta\log t+O(\br^{n/2+1}).
\end{align*}
Then, multiplying both sides of \eqref{Delta-logt} by $-2\br$ and using Lemma \ref{Lap-relate-lem} (i) again, we have
$$
\Delta_g(1/2\log|\r|-A-B\br^{n/2}\log|\r|)=n+O(\br^{n/2+1}).
$$
Hence by \eqref{normal}, \eqref{vol-g} and \eqref{nu-vol}, we obtain 
\begin{align*}
L&={\rm lp}\int_{\{\r<-\e^2/2\}} vol_g \\
&={\rm lp}\,\frac{2}{n}\int_{M_{\e}}\nu\cdot\Bigl(\frac{1}{2}\log|\r|-A-B\br^{n/2}\log|\r|\Bigr)\nu\lrcorner \,vol_g \\
&=a_n \overline{Q}
\end{align*}
with a nonzero constant $a_n$.
\end{proof}

\section{Variation formulas}

\subsection{The first variation formula}
Let $N$ be a locally flat projective manifold of odd dimension $n+1$, and let $\Omega$ be a relatively compact domain in $N$ with strictly convex boundary $M$. We consider a smooth family of strictly convex domains $\{\Omega_t\}$ with $\Omega_0=\Omega$ and compute $(d/dt)|_{t=0}\overline{Q_t}$, where $\overline{Q_t}$ is the total conformal Codazzi $Q$-curvature of 
$\partial\Omega_t$. Since $\overline{Q_t}$ agrees with the log term $L_t$ in the volume expansion of the Blaschke metric, we will compute $\dot L=(d/dt)|_{t=0}L_t$ instead.
We take a family of Fefferman defining densities $\br_t$ of $\Omega_t$ which is smooth in $t$ and denote the associated obstruction density and the Blaschke metric by $\mathcal{O}_t$ and $g^t_{ij}$ respectively. We will omit the subscript or superscript $t$ when $t=0$.

The logarithmic term in the volume expansion can be described in terms of Riesz renormalization 
(see, e.g., \cite{Al} for the details):  
We fix a projective scale $\tau\in\calE(1)$ and set   
$$
\zeta_t(z)=\int_{\Omega_t} (-\r_t)^z vol_{g^t},
$$
where $\r_t=\tau^{-2}\br_t$. Then $\zeta_t(z)$ is holomorphic in $z$ if ${\rm Re} z$ is sufficiently large 
and extends to a meromorphic function on $\mathbb{C}$. Moreover, it has at most simple pole at $z=0$ and satisfies
$$
\underset{z=0}{\rm Res}\, \zeta_t(z)={\rm lp}\int_{\{\r_t<-\e\}} vol_{g^t}=\frac{1}{2}L_t.
$$
Hence we have 
\begin{align*}
\frac{1}{2}\dot L&=\underset{z=0}{\rm Res}\,\dot\zeta_0(z) \\
&=\underset{z=0}{\rm Res}\,\int_{\Omega}\frac{d}{dt}\Big|_{t=0}(-\r_t)^z vol_{g^t} \\
&\quad +\underset{z=0}{\rm Res}\,\lim_{t\to 0}\frac{1}{t}\Bigl(\int_{\Omega_t}(-\r_t)^z vol_{g^t}
-\int_\Omega(-\r_t)^z vol_{g^t}\Bigr).
\end{align*}
The limit in the second term in the last expression equals 0 when ${\rm Re} z$ is large enough since $(-\r_t)^z (\chi_{\Omega_t}-\chi_{\Omega})vol_{g^t}$ is uniformly convergent to 0 in a neighborhood 
of $\partial\Omega$, where $\chi_{\Omega_t}$ and $\chi_{\Omega}$ are the characteristic functions. The first term is computed as 
\begin{align*}
\underset{z=0}{\rm Res}\,\int_{\Omega}\frac{d}{dt}\Big|_{t=0}(-\r_t)^z vol_{g^t}&=
\underset{z=0}{\rm Res}\,z\int_{\Omega}(-\r)^z \frac{\dot\r}{\r}\,vol_{g}
+\underset{z=0}{\rm Res}\,\int_{\Omega}(-\r)^z \frac{d}{dt}\Big|_{t=0}vol_{g^t} \\
&=\underset{z=0}{\rm Res}\,\int_{\Omega}(-\r)^z \frac{d}{dt}\Big|_{t=0}vol_{g^t}.
\end{align*}
Here we have used the fact that the integral 
$$
\int_{\Omega}(-\r)^z \frac{\dot\r}{\r}\,vol_{g}
$$
has at most simple pole at $z=0$. Thus we obtain the following expression for $\dot L$:
$$
\dot L=2\,{\rm lp}\int_{\{\r<-\e\}} \frac{d}{dt}\Big|_{t=0}vol_{g^t}.
$$
The approximate Monge--Amp\`ere equation for $\br_t$ gives
$$
\det g^t_{ij}=(-2\br_t)^{-(n+2)}(1-\mathcal{O}_t\br_t^{n/2+1}),
$$ 
so we have
\begin{align*}
 \frac{d}{dt}\Big|_{t=0}vol_{g^t}&= \frac{d}{dt}\Big|_{t=0}(\det g^t_{ij})^{1/2} \\
&=-\Bigl(\frac{n}{2}+1\Bigr)\Bigl(\,\frac{\dot\br}{\br}+\frac{1}{2}\mathcal{O}\dot\br\br^{n/2}+O(\br^{n/2+1})\Bigr)vol_g.
\end{align*}
We shall compute $\dot\br/\br$ by expressing $\wt g^{IJ}D_ID_J\dot\br$ in two ways. First we let 
$\nabla^\prime\in[\nabla]$ be the representative connection with respect to the singular projective scale 
$\tau^\prime=(-\br)^{1/2}$, as in the proof of Lemma \ref{Lap-relate-lem} (i). Then near the boundary 
it holds that 
\begin{equation*}
\wt g^{IJ}D_ID_J\dot\br=(n+2)\frac{\dot\br}{\br}-\frac{1}{2}g^{ij}\nabla^\prime_i\nabla^\prime_j
\Bigl(\,\frac{\dot\br}{\br}\,\Bigr).
\end{equation*}
Using the first equality of \eqref{trB} and the estimates $g^{kl}\nabla_l\br=O(\br^2)$, $g^{kl}\nabla_k\br\nabla_l\br=4\br^{2}(1+O(\br))$, we have
\begin{align*}
g^{ij}\nabla^\prime_i\nabla^\prime_j\Bigl(\,\frac{\dot\br}{\br}\,\Bigr)
+\Delta_g\Bigl(\,\frac{\dot\br}{\br}\,\Bigr)&=g^{ij}B_{ij}{}^k\nabla_k\Bigl(\,\frac{\dot\br}{\br}\,\Bigr) \\
&=-\frac{1}{2}\Bigl(\frac{n}{2}+1\Bigr)\mathcal{O}\br^{n/2}g^{kl}\nabla_l\br\Bigl(\frac{\nabla_k\dot\br}{\br}-\frac{\dot\br\nabla_k\br}{\br^2}\Bigr)+O(\br^{n/2+1}) \\
&=(n+2)\mathcal{O}\dot\br\br^{n/2}+O(\br^{n/2+1}).
\end{align*}
Hence 
\begin{equation}\label{Lap-dot1}
\wt g^{IJ}D_ID_J\dot\br=(n+2)\frac{\dot\br}{\br}-\Bigl(\frac{n}{2}+1\Bigr)\mathcal{O}\dot\br\br^{n/2}+\frac{1}{2}\Delta_g\Bigl(\,\frac{\dot\br}{\br}\,\Bigr)+O(\br^{n/2+1}).
\end{equation}
On the other hand, by differentiating the Monge--Amp\`ere equation $\det(D_ID_J\br_t)=-1+\mathcal{O}_t\br_t^{n/2+1}$, we have
\begin{equation}\label{Lap-dot2}
\begin{aligned}
\wt g^{IJ}D_ID_J\dot\br&=\bigl(-1+O(\br^{n/2+1})\bigr)^{-1}\Bigl(\Bigl(\frac{n}{2}+1\Bigr)
\mathcal{O}\dot\br\br^{n/2}+O(\br^{n/2+1})\Bigr) \\
&=-\Bigl(\frac{n}{2}+1\Bigr)\mathcal{O}\dot\br\br^{n/2}+O(\br^{n/2+1}).
\end{aligned}
\end{equation}
Comparing \eqref{Lap-dot1} and \eqref{Lap-dot2} gives
$$
\frac{\dot\br}{\br}=-\frac{1}{2(n+2)}\Delta_g
\Bigl(\,\frac{\dot\br}{\br}\,\Bigr)+O(\br^{n/2+1}),
$$
which implies that the integral of $\dot\br/\br$ does not yield a logarithmic term.
Consequently the variation of $L$ is computed as
\begin{align*}
\dot L&=-\Bigl(\frac{n}{2}+1\Bigr)\,{\rm lp}\int_{\{\r<-\e\}} \mathcal{O}\dot\br\br^{n/2} vol_g \\
&=-\Bigl(\frac{n}{2}+1\Bigr)\,{\rm lp}\int_{\{-\e_0<\r<-\e\}} \mathcal{O}\dot\br\br^{n/2}
\frac{1+O(\r)}{(-2\r)^{n/2+1}}d\r\wedge vol_h \\
&=k_n\int_M \mathcal{O}\dot\br,
\end{align*}
where $k_n=(n/2+1)(-2)^{-n/2-1}$.

Thus we have proved the following
\begin{thm}\label{first-var}
Let $\{\Omega_t\}$ be a smooth family of strictly convex domains in a locally flat projective manifold $N$ of 
odd dimension $n+1$. Let $\br_t$ be a Fefferman defining density of $\Omega_t$. Then the total conformal Codazzi $Q$-curvatures $\overline{Q_t}$ on $\partial\Omega_t$ satisfy 
$$
\frac{d}{dt}\Big|_{t=0} \overline{Q_t}=k_n\int_M \mathcal{O}\dot\br,
$$
where $\mathcal{O}$ is the obstruction density on $M=\partial\Omega_0$ and $k_n$ is a nonzero universal constant.
\end{thm}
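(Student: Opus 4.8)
The plan is to exploit the identity $L=a_n\overline{Q}$ established above, so that it suffices to differentiate the logarithmic coefficient $L_t$ in the Blaschke volume expansion rather than $\overline{Q_t}$ directly. I would encode $L_t$ through the Riesz-type zeta function $\zeta_t(z)=\int_{\Omega_t}(-\r_t)^z\,vol_{g^t}$, whose residue at $z=0$ equals $\tfrac12 L_t$, and then differentiate in $t$ under the residue. The odd-dimensional hypothesis guarantees that the Fefferman defining density $\br_t$ and the obstruction density $\mathcal{O}_t$ vary smoothly and that the volume expansion has a clean structure.

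First I would split $\dot\zeta_0$ into an interior variation and a domain-boundary variation. The boundary term $\lim_{t\to0}\tfrac1t\big(\int_{\Omega_t}-\int_\Omega\big)(-\r_t)^z\,vol_{g^t}$ vanishes for $\mathrm{Re}\,z$ large, since the integrand converges uniformly to zero near $\partial\Omega$, and this persists through the meromorphic continuation. The interior term produces two pieces, $z\int_\Omega(-\r)^z(\dot\r/\r)\,vol_g$ and $\int_\Omega(-\r)^z\frac{d}{dt}\big|_0 vol_{g^t}$; the first has at most a simple pole at $z=0$, so its product with $z$ has no residue. This reduces the computation to $\dot L=2\,{\rm lp}\int_{\{\r<-\e\}}\frac{d}{dt}\big|_0 vol_{g^t}$.

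Next I would extract $\frac{d}{dt}vol_{g^t}$ from the determinant form of the Monge--Amp\`ere equation $\det g^t_{ij}=(-2\br_t)^{-(n+2)}(1-\mathcal{O}_t\br_t^{n/2+1})$. Logarithmic differentiation yields exactly two contributions modulo $O(\br^{n/2+1})$: a scalar multiple of $\dot\br/\br$ and the obstruction term $\tfrac12\mathcal{O}\dot\br\,\br^{n/2}$. The obstruction term, integrated against $vol_g\sim(-2\r)^{-(n/2+1)}\,d\r\wedge vol_h$ via \eqref{vol-g} and \eqref{nu-vol}, contributes a genuine logarithmic divergence whose coefficient is the boundary integral $k_n\int_M\mathcal{O}\dot\br$ with $k_n=(n/2+1)(-2)^{-n/2-1}$.

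The main obstacle is to show that the $\dot\br/\br$ contribution carries no logarithmic term. For this I would compute $\wt g^{IJ}D_ID_J\dot\br$ in two ways: first in the singular scale $\tau^\prime=(-\br)^{1/2}$, using the relation between the $D$-operator and $\Delta_g$ from Lemma \ref{Lap-relate-lem} together with the trace estimate \eqref{trB}; and second by directly differentiating $\det(D_ID_J\br_t)=-1+\mathcal{O}_t\br_t^{n/2+1}$. Equating the two expressions yields $\dot\br/\br=-\tfrac{1}{2(n+2)}\Delta_g(\dot\br/\br)+O(\br^{n/2+1})$, which presents $\dot\br/\br$, up to an $O(1)$ integrand, as a divergence, so its integral has no $\log(1/\e)$ coefficient. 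Only the obstruction term then survives, giving the stated formula; the nonvanishing of $k_n$ is immediate from its explicit value.
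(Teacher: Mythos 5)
Your proposal is correct and follows essentially the same route as the paper's own proof: reducing to $\dot L$ via $L=a_n\overline{Q}$, differentiating the Riesz zeta function $\zeta_t(z)$ under the residue, discarding the domain-variation and $z\int(-\r)^z(\dot\r/\r)\,vol_g$ terms, extracting the obstruction term from logarithmic differentiation of the Monge--Amp\`ere determinant, and killing the $\dot\br/\br$ contribution by the two-fold computation of $\wt g^{IJ}D_ID_J\dot\br$ yielding $\dot\br/\br=-\tfrac{1}{2(n+2)}\Delta_g(\dot\br/\br)+O(\br^{n/2+1})$. The constant $k_n=(n/2+1)(-2)^{-n/2-1}$ also matches the paper's.
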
 

Consequently, the critical point of $\overline{Q}$ is given by hypersurfaces whose obstructions vanish. 
As an example, let us compute the obstruction density for a strictly convex surface $M$ in $\R^3$. 
We take the canonical affine scale $\tau=(dx^1\wedge dx^2\wedge dx^3)^{-1/4}$, for which   
$P_{ij}=0$. Then we differentiate \eqref{r} and use \eqref{xi-S} to obtain 
$2\underline{\mathcal{O}}|_M=-|S|^2-r|_M{\rm tr}S+4r^2|_M+({\rm div})$. Since $\tau$ is harmonic, 
$2r|_M={\rm tr}S=\scal-|A|^2$ by \eqref{r-trS}. Noting that 
${\rm tf}\,\Ric_{\a\b}^h={\rm tf}A_{\a\mu\nu}A_{\b}{}^{\mu\nu}=0$ in dimension 2, we also have 
${\rm tf}S_{\a\b}=(\d A)_{\a\b}$ from \eqref{Pab-M}. Therefore the obstruction is given by 
$$
\underline{\mathcal{O}}|_M=-\frac{1}{2}|\d A|^2+({\rm div}).
$$
Integrating this over $M$, we see that $\mathcal{O}\equiv0$ if and only if $\d A\equiv0$, or equivalently 
${\rm tf}S\equiv0$. A hypersurface with ${\rm tf}S\equiv0$ in affine scales is known as an {\it affine sphere}. It is a classical theorem of Blaschke ($n=2$) and Deicke ($n\ge3$) that a strictly convex affine 
sphere in $\R^{n+1}$ is projectively equivalent to the sphere (see \cite{NS} for a proof). Thus we obtain the 
following corollary:
\begin{cor}\label{cor-first-var}
A strictly convex surface in $\R^3$ is a critical point of $\overline{Q}$ if and only if it is projectively equivalent to $S^2$.
\end{cor}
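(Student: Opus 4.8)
The plan is to read Corollary~\ref{cor-first-var} off the first variation formula of Theorem~\ref{first-var}, reduce criticality to the pointwise vanishing of the obstruction density, and then identify that vanishing condition with affine-sphere rigidity via the explicit two-dimensional computation of $\mathcal{O}$ and the classical Blaschke--Deicke theorem.

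First I would reduce criticality to the vanishing of the obstruction. By Theorem~\ref{first-var}, for any smooth family $\{\Omega_t\}$ with $\Omega_0=\Omega$ the first variation of the total $Q$-curvature is $k_n\int_M\mathcal{O}\dot\br$ with $k_n\neq 0$, and this expression depends on the deformation only through the boundary datum $\dot\br|_M$. Since $\dot\br|_M$ is, up to a nonvanishing factor, the normal velocity of the moving boundary, it may be prescribed as an arbitrary smooth density on $M$. Hence the first variation vanishes for every deformation if and only if $\mathcal{O}|_M\equiv 0$ pointwise, so that $M$ is a critical point of $\overline{Q}$ exactly when its obstruction density vanishes identically.

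Second I would compute $\mathcal{O}|_M$ for a strictly convex surface $M\subset\R^3$ (so $n=2$). Working in the canonical affine scale $\tau=(dx^1\wedge dx^2\wedge dx^3)^{-1/4}$, for which $P_{ij}=0$, I would differentiate the transverse-curvature identity \eqref{r} once along $\xi$ and set $\r=0$; since the right-hand side of \eqref{r} carries $\underline{\mathcal{O}}$ at order $\r^{n/2}=\r$, this extracts $\underline{\mathcal{O}}|_M$, while the $\xi S$-terms produced are eliminated using \eqref{xi-S}. This yields $2\underline{\mathcal{O}}|_M=-|S|^2-r|_M\,{\rm tr}S+4r^2|_M+({\rm div})$. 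Because the affine scale is harmonic, \eqref{r-trS} gives ${\rm tr}S=2r|_M$, which collapses the scalar terms to $-|{\rm tf}S|^2$. Finally the reduced Gauss equation \eqref{Pab-M}, combined with the two-dimensional identities ${\rm tf}\,\Ric^h_{\a\b}=0$ and ${\rm tf}(A_{\a\mu\nu}A_\b{}^{\mu\nu})=0$, forces ${\rm tf}S_{\a\b}=(\d A)_{\a\b}$, whence $\underline{\mathcal{O}}|_M=-\tfrac12|\d A|^2+({\rm div})$.

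Third I would close the argument by integration and rigidity. Since $M$ is closed, the divergence term integrates to zero, so $\int_M\mathcal{O}=-\tfrac12\int_M|\d A|^2$. If $M$ is critical, then $\mathcal{O}|_M\equiv 0$, hence $\int_M|\d A|^2=0$ and $\d A\equiv 0$; equivalently ${\rm tf}S\equiv 0$, i.e.\ $M$ is an affine sphere, which by the theorem of Blaschke and Deicke (\cite{NS}) is projectively equivalent to $S^2$. Conversely, a surface projectively equivalent to $S^2$ has $A\equiv 0$, so every term of $\underline{\mathcal{O}}|_M$ vanishes (the sphere is the flat model and solves \eqref{real-MA} exactly, so $\mathcal{O}\equiv 0$), and such an $M$ is critical. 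The main obstacle is the second step: carefully differentiating the lengthy identity \eqref{r} and tracking the divergence terms and the two-dimensional simplifications, since the whole equivalence hinges on the coefficient of $|\d A|^2$ surviving with a definite sign after all cancellations.
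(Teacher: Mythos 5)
Your proposal is correct and follows essentially the same route as the paper: reduce criticality to $\mathcal{O}|_M\equiv 0$ via Theorem \ref{first-var}, compute $\underline{\mathcal{O}}|_M=-\tfrac{1}{2}|\d A|^2+({\rm div})$ in the affine scale using \eqref{r}, \eqref{xi-S}, \eqref{r-trS} and \eqref{Pab-M}, then integrate and invoke the Blaschke--Deicke affine-sphere theorem. The only differences are presentational: you make explicit the "arbitrary normal velocity" argument and the converse direction, which the paper leaves implicit.
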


\subsection{The second variation formula}
We will derive a second variation formula of $\overline{Q}$ for the deformation of $\Omega$ parametrized by a 
density on the boundary $M$. Let $\br$ be a strict Fefferman defining density of $\Omega$. 
We fix a conformal density $f\in\calE[2]$ and extend it to a projective density $\wt f\in\calE(2)$ with $\wt\Delta\wt f=O(\br)$, where 
$\wt\Delta$ is the Laplacian of the ambient metric $\wt g_{IJ}=D_ID_J\br$. We set
$$
X^I=\wt g^{IJ}D_J\wt f\in\calE^I(1).
$$
Recall that the tractor field $X^I$ can be identified with a vector filed $X$ on $\wt N$, the $\R_+$-bundle of positive elements in $\calE(-1)$.  Since $X$ is homogeneous of 
degree 0, it projects to a vector field $\underline{X}$ on $N$. Let ${\rm Fl}_t$ and $\underline{\rm Fl}_t$
be the flows generated by $X$ and $\underline{X}$ respectively. We consider the family of strictly convex domains $\{\Omega_t\}$ defined by $\Omega_t=\underline{\rm Fl}_t(\Omega)$.
\begin{lem}\label{def-density}
There exist Fefferman defining densities $\br_t$ of $\Omega_t$ which satisfy $\br_0=\br$ and 
$\dot\br:=(d/dt)|_{t=0}\,\br_t=-2\wt f+O(\br)$.
\end{lem}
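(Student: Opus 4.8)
The plan is to transport $\br$ along the flow of $X$ and then correct the result to the Fefferman normalization. The point of working with $X^I=\wt g^{IJ}D_J\wt f$ on the ambient space $\wt N$ is that there $\br$ and $\wt f$ are genuine functions (homogeneous of the degree fixed by their weight $2$), $X$ is an honest vector field, and, since $[\nabla]$ is locally flat, the $D$-operator is the flat affine connection on $\wt N$; hence for the scalar $\br$ we have $X^ID_I\br=X(\br)$, the ordinary directional derivative. I would set $\br^{(0)}_t=({\rm Fl}_{-t})^\ast\br=\br\circ{\rm Fl}_{-t}$. Because $X$ is homogeneous of degree $0$ its flow commutes with the dilations, so ${\rm Fl}_t$ descends to $\underline{\rm Fl}_t$ and $\br^{(0)}_t$ is again a density in $\calE(2)$. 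Its zero locus is ${\rm Fl}_t$ of the cone over $M$, which descends to $\partial\Omega_t=\underline{\rm Fl}_t(M)$, while $\br^{(0)}_t<0$ over $\Omega_t$; thus $\br^{(0)}_t$ is a defining density of $\Omega_t$ with $\br^{(0)}_0=\br$.

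Next I would compute the naive variation. Differentiating the pullback gives $\dot\br^{(0)}=-X(\br)=-X^ID_I\br$. The key identity is
$$
D_I\br=\wt g_{IJ}T^J,
$$
which follows by applying $D_I$ to $T^JD_J\br=2\br$ and using $D_IT^J=\d_I{}^J$ together with the commutativity of $D$ (valid since $[\nabla]$ is flat). Hence $\wt g^{IJ}D_I\br=T^J$, and
$$
X^ID_I\br=\wt g^{IJ}D_J\wt f\,D_I\br=D_J\wt f\,(\wt g^{IJ}D_I\br)=T^JD_J\wt f=2\wt f,
$$
the last equality because $\wt f\in\calE(2)$. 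Therefore $\dot\br^{(0)}=-2\wt f$ exactly.

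Finally, $\br^{(0)}_t$ need not solve the Monge--Amp\`ere equation, since the flow of $X$ is not a symmetry of $\calJ$, so it is a defining density but not a Fefferman one. Every defining density of $\Omega_t$ has the form $e^{\U_t}\br^{(0)}_t$ for a weight-$0$ density $\U_t$, and by the existence of Fefferman defining densities (\cite{M}) one may solve the Monge--Amp\`ere equation order by order to choose such a $\U_t$ depending smoothly on $t$; since $\br^{(0)}_0=\br$ is already a (strict) Fefferman defining density one arranges $\U_0=0$, so that $\br_0=\br$. Writing $\br_t=e^{\U_t}\br^{(0)}_t$ and differentiating at $t=0$,
$$
\dot\br=\dot\U\,\br+\dot\br^{(0)}=-2\wt f+\dot\U\,\br=-2\wt f+O(\br),
$$
as claimed.

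The main obstacle is this last step: ensuring that the correction to the Fefferman normalization can be made jointly smooth in $t$ with $\U_0=0$, i.e.\ that the order-by-order solvability in \cite{M} is uniform in the parameter. The tractor computation of the second step, by contrast, is short once the ambient picture is in place, and the $O(\br)$ error in $\dot\br$ is precisely the footprint of this normalization, which cannot in general be removed.
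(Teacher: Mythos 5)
Your overall strategy is the paper's own: pull $\br$ back by the flow of $X$, then correct to the Fefferman normalization. The first two steps are correct; in particular the identity $D_I\br=\wt g_{IJ}T^J$ and the resulting computation $X^ID_I\br=T^JD_J\wt f=2\wt f$ are exactly what the paper uses (it asserts $\dot\br^\prime=-X^ID_I\br=-2\wt f$ without derivation), and writing any defining density of $\Omega_t$ as $e^{\U_t}\br^{(0)}_t$ is legitimate.

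The gap is the step you yourself flag as the main obstacle, and it is not a mere uniformity-in-$t$ issue: the clause ``one arranges $\U_0=0$'' is where the paper's proof does its actual work. In this section $n+1$ is odd, so $n$ is even and Fefferman defining densities are unique only modulo $O(\br^{n/2+2})$. The order-by-order algorithm of \cite[Proposition 3.3]{M} is explicit, so applied to $\br^{(0)}_t$ it produces a family $\br^{\prime\prime}_t$ automatically smooth in $t$, of the structural form
$$
\br^{\prime\prime}_t=(-\calJ[\br^{(0)}_t])^{-1/(n+2)}\br^{(0)}_t+\phi_t(\br^{(0)}_t)^2,
$$
whence $\dot\br^{\prime\prime}=-2\wt f+O(\br)$; so smoothness in $t$ is not the real problem. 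The problem is that the algorithm fed with $\br^{(0)}_0=\br$ does \emph{not} return $\br$: its first step already multiplies by $(-\calJ[\br])^{-1/(n+2)}=1+\tfrac{1}{n+2}\calO\br^{n/2+1}+\cdots\neq 1$, and uniqueness gives only $\br^{\prime\prime}_0=\br+\psi\br^{n/2+2}$ for some $\psi$. To force $\br_0=\br$ one must exploit the order-$(n/2+2)$ ambiguity simultaneously for all $t$, and the correction made at time $t$ must vanish to order $n/2+2$ on the \emph{moving} boundary $\partial\Omega_t$; subtracting $\psi\br^{n/2+2}$ itself would destroy the Fefferman property for $t\neq 0$, since $\br$ does not vanish on $\partial\Omega_t$. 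The paper's device is to transport the discrepancy by the flow, setting $\br_t=\br^{\prime\prime}_t-{\rm Fl}^\ast_{-t}(\psi\br^{n/2+2})$: the subtracted term equals $({\rm Fl}^\ast_{-t}\psi)\,(\br^{(0)}_t)^{n/2+2}$, so $\br_t$ is still a Fefferman defining density of $\Omega_t$, it equals $\br$ at $t=0$, and its $t$-derivative is $O(\br^{n/2+1})$, so the estimate $\dot\br=-2\wt f+O(\br)$ survives. Once this construction is supplied, your multiplicative ansatz (with $\U_t=\log(\br_t/\br^{(0)}_t)$, which then genuinely satisfies $\U_0=0$) closes the argument; without it, the existence of a smooth family through $\br$ is asserted rather than proved.
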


\begin{proof}
We put $\br^\prime_t={\rm Fl}^\ast_{-t}\br$. Then it holds that $\br^\prime_0=\br$ and 
$\dot\br^\prime=-X^ID_I\br=-2\wt f$. Since $\wt\Delta\wt f=O(\br)$, we have
$$
\frac{d}{dt}\Big|_{t=0}\calJ[\br^\prime_t]=\calJ[\br]\,\wt g^{IJ}D_ID_J\dot\br^\prime=O(\br),
$$
which implies $\calJ[\br^\prime_t]\big|_M=-1+O(t^2)$. Starting with $\br^\prime_t$, we construct a Fefferman defining density $\br^{\prime\prime}_t$ by the algorithm described in the proof of 
\cite[Proposition 3.3]{M}. Then $\br^{\prime\prime}_t$ is written as
$$
\br^{\prime\prime}_t=(-\calJ[\br^\prime_t])^{-(1/n+2)}\br^\prime_t+\phi_t(\br^\prime_t)^2
$$ 
with some density $\phi_t$, so it holds that $\dot\br^{\prime\prime}=-2\wt f+O(\br)$. 
Since $\br$ and $\br^{\prime\prime}_0$ are both Fefferman defining densities, we can write as 
$\br^{\prime\prime}_0=\br+\psi\br^{n/2+2}$. Thus, setting  $\br_t=\br^{\prime\prime}_t
-{\rm Fl}_{-t}^\ast(\psi\br^{n/2+2})$, we obtain a family of Fefferman defining densities with $\br_0=\br$
 and $\dot\br=-2\wt f+O(\br)$.
\end{proof}

First we present a variation formula of the obstruction density.

\begin{prop}\label{prop-O-var}
Let $\br_t$ be the Fefferman defining density of $\Omega_t$ given by the previous lemma. Then the obstruction densities $\mathcal{O}_t$ of $\br_t$ satisfy
\begin{equation}\label{O-var}
\frac{d}{dt}\Big|_{t=0} {\rm Fl}_t^\ast\mathcal{O}_t|_M=b_n P_{n/2+2}f,
\end{equation}
where $b_n$ is a nonzero universal constant and $P_{n/2+2}$ is the GJMS operator.
\end{prop}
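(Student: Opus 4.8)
The plan is to differentiate the Monge--Amp\`ere equation pulled back by the flow and to evaluate $\frac{d}{dt}\big|_{t=0}{\rm Fl}_t^\ast\calJ[\br_t]$ in two independent ways, matching the coefficients of $\br^{n/2+1}$. Throughout I would take $\wt f$ to be a harmonic extension as in Proposition \ref{harm-ext}(i), so that $\wt\Delta\wt f=\psi\br^{n/2+1}$ with $c_{n/2+2}\psi|_M=P_{n/2+2}f$; this is permissible, refines the hypothesis $\wt\Delta\wt f=O(\br)$, and does not affect the first-order deformation of $M$ (changing $\wt f$ by $\br h$ alters $X^I$ only by a multiple of the Euler field $T^I$ plus terms vanishing on $M$).

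The first key step is to sharpen Lemma \ref{def-density} to $\dot\br=-2\wt f+O(\br^{n/2+2})$. Writing $\dot\br=-2\wt f+\br\eta$, I would equate the Jacobi formula $\frac{d}{dt}\big|_0\calJ[\br_t]=\calJ[\br]\,\wt g^{IJ}D_ID_J\dot\br$ with the direct $t$-derivative of $-1+\mathcal{O}_t\br_t^{n/2+1}+O(\br_t^{n/2+2})$. These expressions are identically equal, and solving the resulting relation order by order — using the commutation relations \eqref{Lap-comm}, whose coefficients vanish precisely at the resonant weights occurring at $\br^{n/2}$ and $\br^{n/2+2}$ — forces $\eta=O(\br^{n/2+1})$.

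I would then exploit the exact identity $\wt g^{IJ}D_I\br=T^J$, which gives $X^ID_I\br=T^JD_J\wt f=2\wt f$ with no error term. Combined with the sharpened estimate this yields $\frac{d}{dt}\big|_0{\rm Fl}_t^\ast\br_t=X(\br)+\dot\br=O(\br^{n/2+2})$, so that pulling back the obstruction expansion and differentiating annihilates every term but the leading one:
\[
\frac{d}{dt}\Big|_0{\rm Fl}_t^\ast\calJ[\br_t]=\Big(\frac{d}{dt}\Big|_0{\rm Fl}_t^\ast\mathcal{O}_t\Big)\br^{n/2+1}+O(\br^{n/2+2}).
\]
On the other hand, the chain rule writes the same derivative as $X(\calJ[\br])+\frac{d}{dt}\big|_0\calJ[\br_t]$. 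Applying the Jacobi formula to each piece and inserting the trace formula \eqref{trGamma}, the transport term $X(\calJ[\br])$ contributes $(n+2)\mathcal{O}\wt f\,\br^{n/2}+\langle D\wt f,D\mathcal{O}\rangle_{\wt g}\br^{n/2+1}$, while the variational term contributes the negatives of both together with the surviving piece $-2\wt\Delta\wt f=-2\psi\br^{n/2+1}$. The $\mathcal{O}\wt f$ and $\langle D\wt f,D\mathcal{O}\rangle_{\wt g}$ terms cancel in pairs, leaving $-2\psi\br^{n/2+1}+O(\br^{n/2+2})$. Comparing the two computations gives $\frac{d}{dt}\big|_0{\rm Fl}_t^\ast\mathcal{O}_t|_M=-2\psi|_M=b_nP_{n/2+2}f$ with $b_n=-2c_{n/2+2}^{-1}\neq0$.

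The main obstacle is the precise bookkeeping at orders $\br^{n/2}$ and $\br^{n/2+1}$: establishing the sharpened estimate and tracking every contribution at these two orders finely enough to witness the two cancellations. These cancellations — resting on the exact identity $X^ID_I\br=2\wt f$ and on the explicit form of \eqref{trGamma} — are exactly what turn the transported obstruction into a conformally invariant operator acting on $f$; without them the answer would retain the non-invariant term $\langle D\wt f,D\mathcal{O}\rangle_{\wt g}$.
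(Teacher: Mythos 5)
Your central computation is correct, and at its core it is the paper's own argument in different packaging: both proofs differentiate the Monge--Amp\`ere equation by Jacobi's formula, use the trace formula \eqref{trGamma} together with the exact identities $\wt g^{IJ}D_J\br=T^I$ and $X^ID_I\br=2\wt f$, exploit the same cancellation of the $(n+2)\mathcal{O}\wt f\br^{n/2}$ and $\langle D\wt f,D\mathcal{O}\rangle_{\wt g}\br^{n/2+1}$ terms, and identify the surviving coefficient through the (harmonic-)extension characterization of $P_{n/2+2}$. The organizational differences: the paper keeps $\wt f$ arbitrary subject only to $\wt\Delta\wt f=O(\br)$, needs just the weak estimate $\dot\br=-2\wt f+O(\br^2)$, and arrives at $\wt\Delta\dot\br=(\dot{\mathcal{O}}+X^ID_I\mathcal{O})\br^{n/2+1}+O(\br^{n/2+2})$, after which it applies $\wt\Delta^{n/2+1}$, using that $\dot\br$ is itself an extension of $-2f$ to which Proposition \ref{harm-ext}(i) applies. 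You instead fix a harmonic extension, prove the sharper estimate $\dot\br=-2\wt f+O(\br^{n/2+2})$ --- which your organization genuinely requires, since you substitute $-2\wt f$ for $\dot\br$ inside the second-order operator; the estimate is attainable because the commutator coefficient $m(n+4-2m)$ for weight-two densities vanishes only at $m=n/2+2$ (your remark that $\br^{n/2}$ is also a resonant order is inaccurate, though harmless) --- and then read off the answer from the pulled-back equation. The two computations are the same algebra relocated; yours costs a sharper estimate, the paper's costs recognizing $\dot{\mathcal{O}}+X^ID_I\mathcal{O}$ as the derivative of the transported obstruction.

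The one genuine gap is the reduction to harmonic extensions. The proposition concerns the family $\Omega_t$ generated by the fixed admissible extension $\wt f$ of the setup, so your proof needs the claim that the left side of \eqref{O-var} is unchanged when $\wt f$ is replaced by a harmonic extension, and your parenthetical argument does not establish this. First, the ambiguity class is misstated: if $\wt f$ and $\wt f+\br h$ both satisfy $\wt\Delta(\cdot)=O(\br)$, the commutation relation \eqref{Lap-comm} forces $(n+2)h=O(\br)$, so the admissible ambiguity is $\br^2\chi$, not $\br h$ with arbitrary $h$. Second, invariance of the first-order deformation of $M$ is not sufficient: the left side of \eqref{O-var} is the pullback of the \emph{density} $\mathcal{O}_t$ by the \emph{ambient} flow ${\rm Fl}_t$, so it is also sensitive to the vertical (Euler) component of $X$ over $M$; a drift $hT^I$ with $h|_M\neq0$ changes the transport term $X^ID_I\mathcal{O}|_M$ by $-(n+2)h\mathcal{O}|_M$, which need not vanish. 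With the correct ambiguity $\br^2\chi$ both the horizontal and vertical discrepancies do vanish along $M$, so the reduction can be carried out, but it requires an actual argument (one must also compare the variations $\dot{\mathcal{O}}$ of the obstruction densities of the two families); as written it is asserted, not proved. Note that the paper's organization avoids this issue entirely, precisely because its final step feeds $\dot\br$ --- not $\wt f$ --- into the GJMS machinery, and hence works verbatim for every admissible extension.
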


\begin{proof}
Differentiating both sides of $\calJ[\br_t]=-1+\mathcal{O}_t\br_t^{n/2+1}$ at $t=0$, we have 
$$
\wt g^{IJ}D_ID_J\dot\br=-\Bigl(\frac{n}{2}+1\Bigr)\mathcal{O}\dot\br\br^{n/2}
-\dot{\mathcal{O}}\br^{n/2+1}+O(\br^{n/2+2}).
$$
It follows from this equation and \eqref{trGamma} that 
\begin{align*}
\wt\Delta\dot\br&=-\wt g^{IJ}D_ID_J\dot\br+\wt g^{IJ}\wt\Gamma_{IJ}{}^KD_K\dot\br \\
&=\Bigl(\dot{\mathcal{O}}-\frac{1}{2}\,\wt g^{IJ}D_I\mathcal{O}D_J\dot\br\Big)\br^{n/2+1}+O(\br^{n/2+2}).
\end{align*}
Then, since $\dot\br$ and $\wt f$ satisfy $\dot\br+2\wt f=O(\br)$ and $\wt\Delta(\dot\br+2\wt f\,)=O(\br)$, it holds that $\dot\br=-2\wt f+O(\br^2)$. Thus we have
\begin{equation*}
\wt\Delta\dot\br=(\dot{\mathcal{O}}+X^ID_I\mathcal{O})\br^{n/2+1}+O(\br^{n/2+2}).
\end{equation*}
Applying $\wt\Delta^{n/2+1}$ to both sides yields
$$
b_n P_{n/2+2}f=(\dot{\mathcal{O}}+X^ID_I\mathcal{O})|_M
=\frac{d}{dt}\Big|_{t=0}{\rm Fl}_t^\ast\mathcal{O}_t|_M,
$$
with a nonzero constant $b_n$. 
\end{proof}

Next, we prepare a lemma on the derivatives of a density $\psi\in\calE(-n-2)$. 
Since $\psi$ is identified with a homogeneous function on $\wt N$, one can consider the derivative 
$X\psi\in\calE(-n-2)$. On the other hand $\psi$ can also be regarded as a form on $N$ via the 
isomorphism $\calE(-n-2)\cong\wedge^{n+1}T^\ast N$, so one can consider the Lie derivative 
$\mathcal{L}_{\underline{X}}\psi$. The following lemma asserts that the two derivatives coincide at $M$.

\begin{lem}\label{density-derivative}
For a projective density $\psi\in\calE(-n-2)$, we have 
$$
\mathcal{L}_{\underline{X}}\psi=X\psi
$$
at $M$ under the identification $\calE(-n-2)\cong\wedge^{n+1} T^\ast N$.
\end{lem}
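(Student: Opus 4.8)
The plan is to fix a projective scale $\tau\in\calE(1)$, compute both derivatives through their components in this scale, and show that their difference is exactly the tractor divergence $D_IX^I$, which vanishes on $M$. Trivializing $\wt N$ by $\tau^{-1}$ with fiber coordinate $t$, a density $\psi\in\calE(-n-2)$ is at once the homogeneous function $\psi_0\,t^{-n-2}$ on $\wt N$ and the volume form $\psi_0\,\tau^{-n-2}$ on $N$, where $\psi_0\in\calE(0)$ is its component. In this scale $X^I$ splits as $(X^i,\lambda)$, where the $TN$-part $X^i$ is the projection $\underline X$ (the top slot being scale-invariant) and $\lambda\in\calE(0)$ is the density slot, so as a vector field $X=X^i\partial_{x^i}+\lambda\,t\partial_t$ with Euler field $t\partial_t$. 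First I would record the two component computations: differentiating the function gives $X\psi=(X^i\nabla_i\psi_0-(n+2)\lambda\psi_0)\,\tau^{-n-2}$, while $\mathcal{L}_{\underline X}(\psi_0\,\tau^{-n-2})=\nabla_i(\psi_0X^i)\,\tau^{-n-2}$ since $\nabla$ preserves $\tau^{-n-2}$. Subtracting yields
$$\mathcal{L}_{\underline X}\psi-X\psi=\bigl(\nabla_iX^i+(n+2)\lambda\bigr)\psi.$$

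The key observation is that $\nabla_iX^i+(n+2)\lambda$ is precisely the tractor divergence $D_IX^I$: contracting the defining formula for $D_K$ with the tractor connection \eqref{tract-conn} gives $D_IX^I=\lambda+\bigl(\nabla_iX^i+(n+1)\lambda\bigr)=\nabla_iX^i+(n+2)\lambda$. Hence $\mathcal{L}_{\underline X}\psi-X\psi=(D_IX^I)\psi$, and the lemma reduces to proving $D_IX^I=O(\br)$. For this I would use $X^I=\wt g^{IJ}D_J\wt f$ together with metricity of $\wt\nabla$: since $D$ and $\wt\nabla$ agree on the scalar $\wt f$, we get $\wt\nabla_IX^I=\wt g^{IJ}\wt\nabla_I\wt\nabla_J\wt f=-\wt\Delta\wt f=O(\br)$ by the harmonicity of $\wt f$. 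Finally $D_IX^I-\wt\nabla_IX^I=-\wt\Gamma_{IL}{}^IX^L$, and the trace $\wt\Gamma_{IL}{}^I$ is $O(\br^{n/2})$ by \eqref{trGamma}; as $n$ is even this is $O(\br)$, so $D_IX^I=O(\br)$ and in particular vanishes on $M$, completing the proof.

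The main obstacle here is conventional rather than analytic: one must carefully match the identification of a weight-one tractor with a degree-zero vector field on $\wt N$ and correctly locate the Euler component $\lambda$, so that the combination $\nabla_iX^i+(n+2)\lambda$ appears and is recognized as $D_IX^I$. Once this identification is secured, the vanishing on $M$ follows immediately from metricity of $\wt\nabla$, the assumption $\wt\Delta\wt f=O(\br)$, and the Monge--Amp\`ere trace estimate \eqref{trGamma}; it is reassuring that the final conclusion is manifestly independent of $\tau$, as it must be since both sides of the asserted identity are.
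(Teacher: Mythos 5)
Your proof is correct, and although it rests on the same underlying mechanism as the paper's, the technical route is genuinely different: the paper never forms the tractor divergence. Instead it introduces the $D$-parallel volume form $\Psi=t^{n+1}dt\wedge\tau^{-(n+2)}$ on $\wt N$, observes $vol_{\wt g}=\bigl(1+O(\br^{n/2+1})\bigr)\Psi$ from the Monge--Amp\`ere equation, deduces $\mathcal{L}_X\Psi=O(\br)$ from $\mathcal{L}_X vol_{\wt g}=-\wt\Delta\wt f\,vol_{\wt g}=O(\br)$, and expands $\mathcal{L}_X\Psi$ in the splitting $X=at\partial_t+\underline{X}$ to obtain $\mathcal{L}_{\underline{X}}\tau^{-(n+2)}|_M=-(n+2)a\tau^{-(n+2)}|_M$; the lemma then follows by essentially the same componentwise evaluation of $\mathcal{L}_{\underline{X}}\psi$ and $X\psi$ that you carry out. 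Your exact identity $\mathcal{L}_{\underline{X}}\psi-X\psi=(D_IX^I)\psi$, with $D_IX^I=\nabla_iX^i+(n+2)\lambda$, is precisely what $\mathcal{L}_X\Psi=(D_IX^I)\Psi$ encodes, and your two inputs for $D_IX^I=O(\br)$ --- metricity together with $\wt\Delta\wt f=O(\br)$ giving $\wt\nabla_IX^I=O(\br)$, and the Christoffel trace estimate \eqref{trGamma} --- correspond one-to-one to the paper's two inputs, namely $\mathcal{L}_Xvol_{\wt g}=-\wt\Delta\wt f\,vol_{\wt g}$ and the comparison $vol_{\wt g}\approx\Psi$ (both trace statements are the Monge--Amp\`ere equation in disguise). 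What your route buys is economy and transparency: the discrepancy between the two derivatives is named as a single scalar, $D_IX^I$, valid everywhere rather than only at $M$, and it is estimated purely from formulas already displayed in the paper, with no need to introduce $\Psi$ or the identification of tractor determinants with volume forms. What the paper's route buys is that its intermediate relation $\mathcal{L}_{\underline{X}}\tau^{-(n+2)}|_M=-(n+2)a\tau^{-(n+2)}|_M$ is exactly the form reused in the proof of Theorem \ref{L-second-var}, where $vol_{h^t}=\xi_t\lrcorner\,\tau^{-(n+2)}$ is differentiated along the flow.
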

\begin{proof}
We take a projective scale $\tau\in\calE(1)$ and let $t\in\R_+$ be the fiber coordinate of $\wt N$ 
associated with the trivialization by $\tau^{-1}$. It can be seen that the determinant of a tractor in $\calE_{IJ}$ defined in \cite{M} is identical to the one with respect to the volume form 
$$
\Psi=t^{n+1}dt\wedge\tau^{-(n+2)}
$$
on $\wt N$. Here we regard $\tau^{-(n+2)}$ as a form on $\wt N$ by pulling it back by the 
projection map. By the Monge--Amp\`ere equation, the volume form of $\wt g_{IJ}$ satisfies
$$
vol_{\wt g}=\bigl(1+O(\br^{n/2+1})\bigr)\Psi.
$$
Then it follows from $\mathcal{L}_X vol_{\wt g}=-\wt\Delta\wt f \,vol_{\wt g}=O(\br)$ that 
$\mathcal{L}_X\Psi=O(\br)$. Since $X$ is homogeneous of degree 0, there is a function $a$ on $N$ such that $X=at(\partial/\partial t)+\underline{X}$. Using $Xt=at$ and $\mathcal{L}_X dt=d(at)$, we have
$$
\mathcal{L}_X\Psi=t^{n+1}dt\wedge\bigl(\mathcal{L}_{\underline{X}}\tau^{-(n+2)}+(n+2)a\tau^{-(n+2)}\bigr),
$$
and thus
\begin{equation*}
\mathcal{L}_{\underline{X}}\tau^{-(n+2)}|_M=-(n+2)a\tau^{-(n+2)}|_M.
\end{equation*}
If we set $\underline{\psi}=\tau^{n+2}\psi$, the density $\psi$ is identified with the function $t^{-(n+2)}\underline{\psi}$. Then it holds at $M$ that 
$$
\mathcal{L}_{\underline{X}}\psi=(\underline{X}\underline{\psi})\,\tau^{-(n+2)}+\underline{\psi}
\mathcal{L}_{\underline{X}}\tau^{-(n+2)} 
=\bigl(\underline{X}\underline{\psi}-(n+2)a\underline{\psi}\bigr)\tau^{-(n+2)},
$$
and that 
$$
X\psi=t^{-(n+2)}\bigl(\underline{X}\underline{\psi}-(n+2)\,t^{-1}(Xt)\underline{\psi}\bigr) 
=t^{-(n+2)}\bigl(\underline{X}\underline{\psi}-(n+2)a\underline{\psi}\bigr).
$$
Thus $\mathcal{L}_{\underline{X}}\psi|_M$ and $X\psi|_M$ coincide under the identification 
$\calE(-n-2)\cong\wedge^{n+1}T^\ast N$. 
\end{proof}

Now we are ready to show the following

\begin{thm}\label{L-second-var}
Let $\br$ be a strict Fefferman defining density of a strictly convex domain $\Omega$ in a locally flat projective manifold $N$ of odd dimension $n+1$. If $\{\Omega_t\}$ is the deformation of 
$\Omega$ defined from $f\in\calE[2]$, the total conformal Codazzi $Q$-curvatures $\overline{Q_t}$ 
on $\partial\Omega_t$ satisfy
\begin{equation}\label{second-var}
\frac{d^2}{dt^2}\Big|_{t=0}\overline{Q_t}=k_n^\prime\int_M f P_{n/2+2}f,
\end{equation}
where $k_n^\prime$ is a nonzero universal constant and $P_{n/2+2}$ is the GJMS operator.
\end{thm}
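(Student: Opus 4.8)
The plan is to differentiate the first variation formula of Theorem \ref{first-var}, now applied at an arbitrary time rather than only at $t=0$. Reparametrizing time, Theorem \ref{first-var} gives
$$
\frac{d}{dt}\overline{Q_t}=k_n\int_{M_t}\mathcal{O}_t\,\dot\br_t,
$$
an integral over the moving boundary $M_t=\underline{\rm Fl}_t(M)$ of the weight $-n$ density $\mathcal{O}_t\dot\br_t$. Since the whole family is generated by the single ambient vector field $X^I=\wt g^{IJ}D_J\wt f$ with flow ${\rm Fl}_t$, I would transport this integral back to the fixed boundary $M$ along the flow and differentiate there. The total $t$-derivative then decomposes into an intrinsic variation and a Lie-transport term along $\underline{X}$, and Lemma \ref{density-derivative} is exactly what converts the transport term into the tractor derivative $X^ID_I$, so that the variation of each factor is computed within the ambient calculus.

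Applied to the obstruction factor this produces the combination $\dot{\mathcal{O}}+X^ID_I\mathcal{O}=\frac{d}{dt}\big|_{t=0}{\rm Fl}_t^*\mathcal{O}_t$, which by Proposition \ref{prop-O-var} equals $b_nP_{n/2+2}f$ on $M$; applied to the velocity factor it produces its boundary value $\dot\br|_M=-2f$ from Lemma \ref{def-density}. Collecting these, I expect
$$
\frac{d^2}{dt^2}\Big|_{t=0}\overline{Q_t}=-2b_nk_n\int_M f\,P_{n/2+2}f+k_n\int_M\mathcal{O}\,(\ddot\br+X^ID_I\dot\br)\big|_M,
$$
so it remains only to show that the second integral vanishes, with $k_n'=-2b_nk_n\neq0$.

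That vanishing is the crux. I would first check it for the pure pullback family $\br_t={\rm Fl}_{-t}^*\br$, where $\dot\br=-X^ID_I\br=-2\wt f$ exactly and hence $\ddot\br=(X^ID_I)^2\br=X^ID_I(2\wt f)$, so that $\ddot\br+X^ID_I\dot\br=0$ identically. For the genuine Fefferman family supplied by Lemma \ref{def-density} one has $\dot\br=-2\wt f+O(\br^2)$, and the corrections that restore the Monge--Amp\`ere equation enter at order $O(\br^2)$; differentiating them twice still leaves a factor of $\br$ at $M$, so $(\ddot\br+X^ID_I\dot\br)|_M=0$ persists, giving the stated formula.

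The main obstacle is twofold. First, one must justify transporting the weight $-n$ boundary integral along the flow and differentiating under the integral sign; I would do this by passing to the bulk representation ${\rm lp}\int_{\Omega_t}\mathcal{O}_t\dot\br_t\br_t^{n/2}\,vol_{g^t}$ used in the proof of Theorem \ref{first-var}, whose integrand is a genuine top form, i.e.\ a weight $-n-2$ density on $N$, and then invoking Lemma \ref{density-derivative} together with Stokes' theorem so that the Lie-transport term becomes a boundary contribution to the logarithmic coefficient. Second, and more delicate, is the bookkeeping for the velocity factor: one must track the $O(\br^2)$ Fefferman corrections carefully enough to be certain the acceleration term $(\ddot\br+X^ID_I\dot\br)|_M$ genuinely drops out rather than contributing an extra multiple of $\int_M\mathcal{O}|_M\,\ddot\br|_M$. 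The self-adjointness of $P_{n/2+2}$ serves as a consistency check, since the surviving term $\int_M fP_{n/2+2}f$ is manifestly symmetric in $f$.
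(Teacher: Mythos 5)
Your proposal has the right skeleton, and it agrees with the paper's proof in its two computational pillars: the combination $\dot{\calO}+X^ID_I\calO=b_nP_{n/2+2}f$ from Proposition \ref{prop-O-var}, and the vanishing of $(\ddot\br+X^ID_I\dot\br)|_M$, which the paper establishes exactly in your spirit by differentiating ${\rm Fl}_t^\ast\br_t=O(\br)$ twice in $t$ and using $\dot\br=-2\wt f+O(\br^2)$ together with $X^ID_I\br=2\wt f$ (the $O(\br^2)$ precision is essential, and you invoke it correctly). Your final formula, including $k_n^\prime=-2b_nk_n$, is the paper's.

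The gap is the step you flag but never carry out: the identity
$$
\frac{d^2}{dt^2}\Big|_{t=0}\overline{Q_t}
=k_n\int_M\Bigl[(\dot{\calO}+X^ID_I\calO)\,\dot\br+\calO\,(\ddot\br+X^ID_I\dot\br)\Bigr]
$$
is not a formal product rule. The integrand $\calO_t\dot\br_t$ is a density of weight $-n$, and integrating it over $M_t$ uses the identification $\calE(-n)|_{M_t}\cong\wedge^nT^\ast M_t$, concretely $\tau^{-n}\leftrightarrow vol_{h^t}$ with $vol_{h^t}=(\xi_t\lrcorner\,\tau^{-(n+2)})|_{TM_t}$, and this identification itself moves with $t$. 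Lemma \ref{density-derivative} does not apply to such densities (it concerns weight $-(n+2)$ only, as you note), so your displayed formula amounts to an unproven hypersurface analogue of that lemma at weight $-n$. This is precisely where the paper's proof does its work: in the decomposition $\ddot{\overline{Q}}=({\rm I})+({\rm II})$, the scalar variation $({\rm I})$ contains, besides your two terms, the extra term $\int_M\calO\dot\br\,(X\tau^n)$, while the volume variation $({\rm II})$ --- computed from the motion of the affine normal via $\underline{{\rm Fl}}_t^\ast(\xi_t\r_t)=1$ together with Lemma \ref{density-derivative} applied to $\tau^{-(n+2)}$ --- equals exactly $-\int_M\calO\dot\br\,(X\tau^n)$. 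Your formula is true only because these two nonzero terms cancel, a cancellation your product rule silently presupposes. The repair you sketch (differentiating the bulk ${\rm lp}$-integral) is viable in principle but is not the short Stokes argument you suggest: one must control the moving $\e$-level sets inside the renormalization, and terms such as $\calO\dot\br\,\frac{d}{dt}\bigl(\br_t^{n/2}vol_{g^t}\bigr)$ produce $\r^{-2}$-type integrands whose logarithmic coefficients pick up first-order normal jets at $M$; showing that these reassemble into precisely the $X^ID_I$-derivatives in your formula is work of the same order as the paper's boundary computation. Until that bookkeeping (or the paper's $({\rm I})+({\rm II})$ computation) is supplied, the central identity of your proof is unjustified.
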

\begin{proof}
We fix a projective scale $\tau\in\calE(1)$ and take Fefferman defining densities $\br_t=\tau^2\r_t$ 
with the obstruction densities $\mathcal{O}_t=\tau^{-(n+2)}\underline{\mathcal{O}_t}$, as in Lemma \ref{def-density}. Recall from the proof of Proposition \ref{prop-O-var} that the derivative $\dot\br$ satisfies $\dot\br=-2\wt f+O(\br^2)$. 

By Theorem \ref{first-var}, we have
$$
\frac{d}{dt}\overline{Q_t}=\int_{\partial\Omega_t} \mathcal{O}_t\dot\br_t
=\int_M \underline{\rm Fl}_t^\ast(\underline{\mathcal{O}_t}\dot\r_t\, vol_{h^t}),
$$
where $h^t$ is the affine metric on $\partial\Omega_t$ with respect to $\tau$. Thus the second derivative of $L_t$ is decomposed as $\ddot L=\rm (I)+(II)$ with
$$
{\rm(I)}=\int_M \frac{d}{dt}\Big|_{t=0}\underline{{\rm Fl}}_t^\ast(\underline{\mathcal{O}_t}\dot\r_t)\,vol_h,
\quad {\rm (II)}=\int_M \underline{\mathcal{O}}\dot\r\frac{d}{dt}\Big|_{t=0}\underline{{\rm Fl}}_t^\ast vol_{h^t}.
$$

First we compute (I). Using Proposition \ref{prop-O-var}, we have
\begin{align*}
\frac{d}{dt}\Big|_{t=0}\underline{{\rm Fl}}_t^\ast(\underline{\mathcal{O}_t}\dot\r_t)|_M&=
\frac{d}{dt}\Big|_{t=0}{\rm Fl}_t^\ast(\tau^n\mathcal{O}_t\dot\br_t)|_M \\
&=\mathcal{O}\dot\br(X\tau^n)|_M-2b_n\tau^n fP_{n/2+2}f+
\tau^n\mathcal{O}\frac{d}{dt}\Big|_{t=0}{\rm Fl}_t^\ast\dot\br_t|_M.
\end{align*}
Differentiating ${\rm Fl}_t^\ast\br_t=O(\br)$ gives ${\rm Fl}_t^\ast\dot\br_t+
{\rm Fl}_t^\ast(X^ID_I\br_t)=O(\br)$, so it holds that 
\begin{align*}
\frac{d}{dt}\Big|_{t=0}{\rm Fl}_t^\ast\dot\br_t&=-X^ID_I\dot\br-X^JD_J(X^ID_I\br)+O(\br) \\
&=O(\br).
\end{align*}
Thus we obtain 
\begin{equation}\label{I}
{\rm (I)}=k_n^\prime\int_M f P_{n/2+2}f+\int_M \mathcal{O}\dot\br(X\tau^n)
\end{equation}
with a nonzero constant $k_n^\prime$.

Next we compute (II). For each $t$, we take a vector field $\xi_t$ on $N$ determined by the conditions
$\xi_t\r_t=1$ and $\xi_t^i Z^j\nabla_i\nabla_j\r_t=0$ for $Z\in{\rm Ker}\,d\r_t$, where $\nabla\in[\nabla]$ is the representative connection associated with $\tau$. Since $\br_t$ solves the Monge--Amp\`ere equation at $\partial\Omega_t$, the restriction $\xi_t|_{\partial\Omega_t}$ is affine normal. Thus, using Lemma 
\ref{density-derivative}, we compute as 
\begin{align*}
\frac{d}{dt}\Big|_{t=0}\underline{{\rm Fl}}_t^\ast vol_{h^t}&=
\frac{d}{dt}\Big|_{t=0}\underline{{\rm Fl}}_t^\ast (\xi_t\lrcorner\,\tau^{-(n+2)}) \\
&=(\dot\xi+\mathcal{L}_{\underline{X}}\xi)\lrcorner\,\tau^{-(n+2)}
+\xi\lrcorner\,\mathcal{L}_{\underline{X}}\tau^{-(n+2)} \\
&=d\r(\dot\xi+\mathcal{L}_{\underline{X}}\xi)vol_h-(n+2)\tau^{-1}X\tau \,vol_h.
\end{align*}
Differentiating the equation $\underline{{\rm Fl}}_t^\ast(\xi_t\r_t)=1$ and using $\dot\br=-2\wt f+O(\br^2)$ and $X\br=2\wt f$, we have 
\begin{align*}
d\r(\dot\xi+\mathcal{L}_{\underline{X}}\xi)&=-\xi(\dot\r+\underline{X}\r) \\
&=-\xi(\tau^{-2}\dot\br+X(\tau^{-2}\br)) \\
&=2\,\tau^{-1}X\tau+O(\r),
\end{align*}
which yields
\begin{equation}\label{II}
{\rm (II)}=-n\int_M\underline{\mathcal{O}}\dot\r\,\tau^{-1}X\tau\,vol_h
=-\int_M\mathcal{O}\dot\br(X\tau^n).
\end{equation}
Combining \eqref{I} and \eqref{II}, we obtain the formula \eqref{second-var}.
\end{proof}

As an example, we consider the case of the sphere $M=S^n$. Let $(\xi^0, \dots, \xi^{n+1})$ be the 
homogeneous coordinates of $\R\mathbb{P}^{n+1}$. The projective density bundle $\calE(-1)$ of 
 $\R\mathbb{P}^{n+1}$ is identified with the tautological line bundle, and the $D$-operator is given by $\partial/\partial\xi^I$.
The defining density $\br=(1/2)(-(\xi^0)^2+(\xi^1)^2+\cdots+(\xi^{n+1})^2)$ of $M$ is the exact solution 
to the Monge--Amp\`ere equation, so the ambient metric is given by 
$\wt g=-(d\xi^0)^2+(d\xi^1)^2+\cdots+(d\xi^{n+1})^2$. Thus $\wt g$ agrees with the Fefferman--Graham
 ambient metric in conformal geometry. In the standard scale on $S^n$, the GJMS operator is given 
explicitly by
$$
P_{n/2+2}=\prod_{j=1}^{n/2+2}\Bigl(\Delta_h+\Bigl(\frac{n}{2}+j-1\Bigr)\Bigl(\frac{n}{2}-j\Bigr)\Bigr),
$$ 
where $h$ is the standard metric on $S^n$; see \cite{FG2} for the derivation of this formula. 

Let $E_k$ be the eigenspace of $\Delta_h$ with eigenvalue $k(k+n-1)$ for $k\ge0$, so that we have the decomposition $L^2(S^n)=\sum_{k\ge0} E_k$.
It follows from the above formula that ${\rm Ker}\,P_{n/2+2}=E_0\oplus E_1\oplus E_2$ and $P_{n/2+2}$ 
is positive definite on $({\rm Ker}\,P_{n/2+2})^\perp$. 

The space $E_0\oplus E_1\oplus E_2$ has the following geometric interpretation: 
Since eigenfunctions of $\Delta_h$ are given by the restriction of homogeneous harmonic polynomials on 
$\R^{n+1}$, it holds that  
$$
E_0\oplus E_1\oplus E_2=\{F(\xi)=(a_{IJ}\xi^I\xi^J)|_{S^n}\ |\ a_{IJ}=a_{JI}\in\R\}.
$$ 
In fact, noting that we may assume that $\sum_{i=1}^{n+1}a_{ii}=0$ by adding a multiple of $\br$, we have 
$$
F=a_{00}+2a_{0i}x^i+a_{ij}x^ix^j,
$$
where $x^i=\xi^i/\xi^0$ are the affine coordinates, and each term in the right-hand side belongs to 
$E_0$, $E_1$, $E_2$ respectively. The gradient vector field of $F$ is given by
$$
X=-2a_{0J}\xi^J\frac{\partial}{\partial \xi^0}+2\sum_{i=1}^{n+1}a_{iJ}\xi^J\frac{\partial}{\partial\xi^i},
$$ 
so the flow generated by $X$ is of the form ${\rm Fl}_t(\xi)=(I+tA)\xi$ with some matrix $A$.  
Therefore, the flow $\underline{\rm Fl}_t$ on $\R\mathbb{P}^{n+1}$ is a projective linear transformation, which does not change the conformal Codazzi structure on $S^n$.

\end{document}